\newtheoremstyle{theorem}
    {6pt}
    {6pt}
    {\itshape}
    {0em}
    {\scshape}
    {.}
    { }
    {}
\theoremstyle{theorem}
\newtheorem{theorem}{Theorem}[section]
\newtheorem*{theorem*}{Theorem}
\newtheorem{lemma}[theorem]{Lemma}
\newtheorem{corollary}[theorem]{Corollary}
\newtheorem*{corollary*}{Corollary}
\newtheorem{proposition}[theorem]{Proposition}
\newtheorem*{conjecture*}{Conjecture}
\newtheoremstyle{definition}
    {6pt}
    {6pt}
    {\normalfont}
    {0em}
    {\scshape}
    {.}
    { }
    {}
\theoremstyle{definition}
\newtheorem{definition}[theorem]{Definition}
\newtheorem*{definition*}{Definition}
\newtheorem{remark}[theorem]{Remark}
\newtheorem{hyp}[theorem]{Assumptions}
\newtheoremstyle{example}
    {6pt}
    {6pt}
    {\normalfont}
    {0em}
    {\scshape}
    {.}
    { }
    {}
\theoremstyle{example}
\newtheorem*{example*}{Example}
\numberwithin{equation}{section}
\def\RE{\mathop{\rm Re}}
\newcommand{\EMPH}[1]{\textit{#1}}
\newcommand{\SPC}{\hspace{1cm}}
\newcommand{\SET}[2]{\left\{ #1 : #2 \right\}}
\DeclareMathOperator{\Z}{\mathbb{Z}}    
\DeclareMathOperator{\R}{\mathbb{R}}    
\DeclareMathOperator{\C}{\mathbb{C}}    
\DeclareMathOperator{\CO}{\mathcal{C}}
\newcommand{\CDIFF}[1]{\CO^{#1}}
\newcommand{\CINF}{\CO^{\infty}}
\newcommand{\CCINF}{\CINF_0}
\DeclareMathOperator{\OP}{\mathrm{op}}                      
\DeclareMathOperator{\SSCH}{\mathcal{S}}                    
\DeclareMathOperator{\SSYM}{\mathcal{M}^{\lambda}_{\psi}}
\newcommand{\LP}[1]{L^{#1}}                                 
\newcommand{\LNORM}[3]{\NORM{#1}_{\LP{#2}(#3)}}           
\newcommand{\ABS}[1]{\left|{#1}\right|}             
\newcommand{\FT}[1]{\widehat{#1}}                       
\newcommand{\opFT}[1]{({#1})^{\hat{ }}}             
\newcommand{\LINEAR}{\mathcal{L}}                   
\newcommand{\NORM}[1]{\|{#1}\|}               
\DeclareMathOperator{\SUPP}{\mathrm{supp}}          
\newcommand{\PRS}[2]{\left\langle{#1},{#2}\right\rangle}    
\newcommand{\SSTACK}{\sum_{\substack{\gamma=(\gamma_1,\ldots,\gamma_n)\in\Z^n\\
                        |\gamma_1|\le N_1,\ldots,|\gamma_n|\le N_n}}}
\newcommand{\SSSTACK}{\sum_{\substack{\gamma=(\gamma_1,\ldots,\gamma_n)\in\Z^n\\
                        |\gamma_1|\le N_{k,1},\ldots,|\gamma_n|\le N_{k,n}}}}
\begin{document}

\title[$L^p(\R^n)$-continuity of anisotropic multipliers]{$L^p(\R^n)$-continuity of translation invariant\\anisotropic pseudodifferential operators:\\ a necessary condition}

\author{S. Coriasco}
\address{Dipartimento di Matematica, Universit\`a di Torino
\newline\indent
Institut f\"ur Analysis, Leibniz Universität Hannover}
\email{sandro.coriasco@unito.it}

\author{M. Murdocca}
\address{C/O S. Coriasco, Dipartimento di Matematica, Università di Torino}

\subjclass[2000]{Primary 35S05; Secondary 47A05, 47B38, 47G30}
\keywords{Anisotropic, translation invariant, pseudodifferential operator, $L^p(\R^n)$-boundedness.}

\begin{abstract}
	We consider certain anisotropic translation invariant pseudodifferential
	operators, belonging to a class denoted by $\OP(\SSYM)$, where 
	$\lambda$ and $\psi=(\psi_1,\dots,\psi_n)$ are the ``order'' and
	``weight'' functions, defined on $\R^n$, for the corresponding space
	of symbols.
	We prove that the boundedness of a suitable function $F_p\colon\R^n\to[0,+\infty)$,
	$1<p<\infty$, associated with $\lambda$ and $\psi$, is necessary to let every element
	of $\OP(\SSYM)$ be a $L^p(\R^n)$-multiplier. Additionally, we show that
	some results known in the literature can be recovered 
	as special cases of our necessary condition.
\end{abstract}

\maketitle
\pagestyle{plain}
\section{Introduction}
A translation invariant pseudodifferential operator $\sigma(D)$, or multiplier, is defined by means of 
a symbol $\sigma$ which depends only on the covariable $\xi\in\R^n$, that is, as
$\widehat{\sigma(D)u}=\sigma\FT{u}$, where $\;\widehat{\phantom{u}}\;$ denotes the Fourier transform. Of course, for such a definition to make sense, $\sigma$ has to fulfill some suitable additional properties, depending
on the domain of definition and the desired properties of $\sigma(D)$. For instance, even just
$\sigma\in L^\infty(\R^n)$ is enough to ensure that
\begin{equation}
	\label{eq:psido}
	\sigma(D)\colon\SSCH(\R^n)\to\CINF(\R^n)\colon u\mapsto 
	[\sigma(D)u](x)=\frac{1}{(2\pi)^n}\int e^{i\PRS{\xi}{x}}\sigma(\xi)\FT{u}(\xi)\,d\xi
\end{equation}
is a linear continuous map. When the domain of $\sigma(D)$ is a different functional (or distributional) 
space, more regularity of the symbol is usually needed to achieve continuity. Common
choices are the space of temperate distributions $\SSCH^\prime(\R^n)$, for which $\sigma$ must be 
smooth and of at most polynomial growth, together with all its derivatives, and $L^p(\R^n)$.
In the latter case, the situation is more involved,
if one wants to obtain, for any symbol $\sigma$ belonging to a fixed class, a linear continuous map $\sigma(D)\colon L^p(\R^n)\to L^p(\R^n)$, that is, a $L^p(\R^n)$-multiplier.\\

The $L^p(\R^n)$-continuity for pseudodifferential operators is a classical and extensively studied
problem, for multipliers as well as for general symbol classes: we mention just a few issues of the vast literature
on the subject, which are more strictly related to the situation on which we will be focused. For instance, consider the (global) classes $S^m_{\rho,\delta}(\R^n)$ introduced by L. H\"ormander, see \cite{Hor67, Hor79, HorALPDO}:
\begin{definition*}
Let $m,\rho,\delta\in\R$,
and assume $0\le\delta\le\rho\le1$.
Denote by $S^m_{\rho,\delta}(\R^n)$ the class of functions
$a\in\CINF(\R^n\times\R^n)$ such that, for any
$\alpha,\beta\in\Z^n_+$, there exists a constant $C_{\alpha\beta}$ such that
\begin{align}
\label{MM07_in_eq00}
    \bigl|D^{\alpha}_{\xi}D^{\beta}_{x}a(x,\xi)\bigr|\le
    C_{\alpha\beta}(1+|\xi|)^{m-\rho|\alpha|+\delta|\beta|},
    \quad
    (x,\xi)\in \R^n\times\R^n.
\end{align}
\noindent Denote by $A=a(x,D)$ the operator associated with $a(x,\xi)$, given by
\begin{equation}
	\label{eq:defpsido}
    	(Au)(x)=[a(x,D)u](x)=\frac{1}{(2\pi)^{n}}\int e^{i\PRS{\xi}{x}}a(x,\xi)\FT{u}(\xi)\,d\xi,
    	\quad
    	u\in\SSCH(\R^n).
\end{equation}
\end{definition*}
For the case $p=2$, we recall the fundamental result proved by A.~Calderon and R.~Vaillancourt \cite{CV71}:
\begin{theorem*}
	Let $a(x,\xi)$ be a function defined on $\R_{x}^n\times\R_{\xi}^n$ such that
	\begin{align*}
    		|\partial^{\alpha}_{\xi}\partial^{\beta}_{x}a(x,\xi)|\le C_{\alpha\beta}\;, \quad(x,\xi)\in\R^n\times\R^n,
	\end{align*}
	for $\alpha_k,\beta_l=0,1,2,3$, $k,l=1,\dots,n$. Then, the operator \eqref{eq:defpsido}
	can be extended to a bounded operator $A\colon\LP{2}(\R^n)\to\LP{2}(\R^n)$.
\end{theorem*}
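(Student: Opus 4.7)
The plan is to combine a phase-space partition of unity with the Cotlar--Stein almost orthogonality lemma. Choose $\phi\in\CCINF(\R^{2n})$ supported in $[-1,1]^{2n}$ with $\sum_{k\in\Z^{2n}}\phi(z-k)\equiv 1$, and for $k=(x_k,\xi_k)\in\Z^n\times\Z^n$ set
\begin{equation*}
	a_k(x,\xi)=\phi(x-x_k,\xi-\xi_k)\,a(x,\xi),
\end{equation*}
so that $a=\sum_k a_k$. Each $a_k$ is smooth, supported in a cube of side $2$ centered at $(x_k,\xi_k)$, and by the Leibniz rule its derivatives $\partial^\alpha_\xi\partial^\beta_x a_k$ of multi-orders $\alpha,\beta$ with $\alpha_i,\beta_j\in\{0,1,2,3\}$ are bounded uniformly in $k$. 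Denote by $A_k$ the operator built from $a_k$ through \eqref{eq:defpsido}. A direct Schur test on its Schwartz kernel, exploiting the compactness of $\SUPP a_k$, shows $\NORM{A_k}_{\LP2\to\LP2}\le C_0$ uniformly in $k$.

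The heart of the argument is the off-diagonal decay of $A_jA_k^*$ and $A_j^*A_k$. After eliminating the intermediate variable in the composition, the kernel of $A_jA_k^*$ becomes
\begin{equation*}
	K_{jk}(x,z)=(2\pi)^{-n}\int a_j(x,\xi)\overline{a_k(z,\xi)}\,e^{i\PRS{\xi}{x-z}}\,d\xi,
\end{equation*}
which vanishes unless $|\xi_j-\xi_k|_\infty\le 2$. Integrating by parts three times in each coordinate of $\xi$, which the hypothesis permits, produces a factor $\prod_{i=1}^n(1+|(x_j-x_k)_i|)^{-3}$ in $K_{jk}$. Dually, the kernel of $A_j^*A_k$ vanishes unless $|x_j-x_k|_\infty\le 2$, and three integrations by parts in each coordinate of the intermediate variable $y$, using the order-$3$ bounds on the $x$-derivatives, yield a factor $\prod_{i=1}^n(1+|(\xi_j-\xi_k)_i|)^{-3}$. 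Schur's test then transforms these kernel estimates into
\begin{equation*}
	\NORM{A_jA_k^*}^{1/2}_{\LP2\to\LP2}+\NORM{A_j^*A_k}^{1/2}_{\LP2\to\LP2}\le \gamma(j-k),
\end{equation*}
with $\gamma\in\ell^1(\Z^{2n})$.

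The Cotlar--Stein lemma then gives $\NORM{A}_{\LP2\to\LP2}\le\sum_{k\in\Z^{2n}}\gamma(k)<\infty$ a priori on $\SSCH(\R^n)$, and this bound extends by density to all of $\LP2(\R^n)$. The main obstacle is the meticulous integration-by-parts bookkeeping: one must check that all boundary terms vanish (which they do, thanks to the compact supports introduced by the partition of unity) and that the resulting constants are genuinely independent of $(j,k)$, so that summability survives the sum over the phase-space lattice. The order $3$ in the hypothesis is chosen precisely to yield square-root summability of the product bound along each of the $2n$ lattice directions, and is the minimal integer value that makes the Cotlar--Stein argument close.
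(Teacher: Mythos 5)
The paper does not actually prove this statement: it is the classical Calderon--Vaillancourt theorem, recalled in the introduction purely as background and attributed to \cite{CV71}, so there is no in-paper proof to compare yours against. Taken on its own terms, your argument is the standard almost-orthogonality proof and is essentially correct: the unit-cube partition of phase space, the uniform bounds on the localized pieces, the support constraints and integration-by-parts estimates for $A_jA_k^*$ and $A_j^*A_k$, and the summability of $\gamma$ (three derivatives per coordinate give exponent $3/2>1$ after taking square roots, hence an $\ell^1$ bound over $\Z^{2n}$) all fit together as you describe. Two small caveats. First, the ``direct Schur test'' for a single $A_k$ is not quite direct: compactness of $\SUPP a_k$ makes the kernel $K_k(x,y)=(2\pi)^{-n}\int e^{i\PRS{\xi}{x-y}}a_k(x,\xi)\,d\xi$ bounded, but $\sup_x\int|K_k(x,y)|\,dy$ still requires decay in $|x-y|$, so an integration by parts in $\xi$ is needed here as well (two derivatives per coordinate suffice and are available under the hypothesis). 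Second, ``the minimal integer value that makes the Cotlar--Stein argument close'' should be read as minimality for this particular bookkeeping, not as a sharpness claim for the theorem, which is known to hold under weaker regularity assumptions. Neither point affects the validity of the proof.
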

\noindent
A version of the Calderon-Vaillancourt Theorem which holds for operators with symbols
in the class $S^0_{\rho,\rho}(\R^n)$, $\rho\in[0,1)$, can be found, e.g., in the book by
M. Taylor \cite{Tay81}.\\

The case $1<p<\infty$, $p\not=2$, has been investigated by many authors in different situations,
see, e.g., R. Beals \cite{Bea79}, C. Fefferman \cite{Fef73}, L. H\"ormander \cite{Hor60}, 
D.~S. Kurtz and R.~L. Wheeden \cite{KW79}, J.~Marcinkiewicz \cite{Ma39}, see also \cite{St70, St93},
G. Mihlin \cite{Mi56, Mi57},  A. Nagel and E. Stein \cite{NS78}.
For the class $S^0_{1,0}(\R^n)$ the following result holds (for a proof see, e.g., the book by 
M.~W. Wong \cite{Won99} and the references quoted therein):
\begin{theorem*}
	Let $\sigma\in S^0_{1,0}(\R^n)$.
	Then, for $1<p<\infty$, $\sigma(D)$ can be extended to a bounded operator from 
	$\LP{p}(\R^n)$ to itself.
\end{theorem*}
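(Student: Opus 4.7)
The plan is to verify the hypotheses of the classical Calder\'on-Zygmund theorem for the convolution operator $\sigma(D)$. Since $\sigma\in S^0_{1,0}(\R^n)$ is in particular bounded, Plancherel's theorem immediately yields $L^2$-continuity with $\NORM{\sigma(D)u}_{L^2}\le\NORM{\sigma}_{L^\infty}\NORM{u}_{L^2}$. The substantive task is to show that the convolution kernel $k=\mathcal{F}^{-1}\sigma$, a tempered distribution in general, is represented away from the origin by a function enjoying the standard pointwise Calder\'on-Zygmund estimates.

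First I would perform a Littlewood-Paley decomposition $\sigma=\sum_{j\ge 0}\sigma_j$, with $\sigma_0$ supported in $\{|\xi|\le 2\}$ and, for $j\ge 1$, $\sigma_j$ supported in the annulus $\{2^{j-1}\le|\xi|\le 2^{j+1}\}$. Since $\sigma\in S^0_{1,0}(\R^n)$, the bounds $|D^\alpha\sigma(\xi)|\le C_\alpha(1+|\xi|)^{-|\alpha|}$ transfer to $|D^\alpha\sigma_j(\xi)|\le C_\alpha 2^{-j|\alpha|}$, uniformly in $j$. Integration by parts in the inverse Fourier integral defining $k_j=\mathcal{F}^{-1}\sigma_j$ then yields the pointwise estimate
\[
    |k_j(x)|\le C_N\,2^{jn}(1+2^j|x|)^{-N},\qquad N\in\N,
\]
together with an analogous bound for $\nabla k_j$ with $2^{jn}$ replaced by $2^{j(n+1)}$.

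Second, I would sum the $k_j$ over $j$ in two dyadic regimes, separating indices with $2^j|x|\le 1$ from those with $2^j|x|>1$ and tuning $N$ differently in each, to deduce the classical pointwise bounds $|k(x)|\le C|x|^{-n}$ and $|\nabla k(x)|\le C|x|^{-n-1}$ for $x\ne 0$. A mean-value argument then delivers the H\"ormander integral condition
\[
    \int_{|x|>2|y|}|k(x-y)-k(x)|\,dx\le C,
\]
uniformly in $y\ne 0$. The Calder\'on-Zygmund theorem upgrades the $L^2$-boundedness to weak-$(1,1)$ continuity; Marcinkiewicz interpolation between these two endpoints yields $L^p$-continuity for $1<p\le 2$, and the range $2\le p<\infty$ follows by duality, since $\sigma(D)^*=\overline{\sigma}(D)$ lies in the same class $S^0_{1,0}(\R^n)$.

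The main obstacle is the rigorous assembly of the kernel estimates: the series $\sum_j k_j$ does not converge absolutely at $x=0$, so the pointwise control of $k$ and $\nabla k$ must be obtained through the two-regime splitting above, balancing the growth factor $2^{jn}$ against the algebraic decay $(1+2^j|x|)^{-N}$ so that a geometric series results in each regime. Once this step is in place, the Calder\'on-Zygmund, Marcinkiewicz and duality arguments all proceed along well-understood lines.
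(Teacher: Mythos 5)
Your argument is correct, and it is in substance the route the paper points to: the paper does not prove this statement itself, but cites Wong's book and remarks that the Mihlin--H\"ormander multiplier theorem is the main ingredient. The shortest version of that route is a one-line reduction: a multiplier $\sigma\in S^0_{1,0}(\R^n)$ satisfies $|D^\alpha\sigma(\xi)|\le C_\alpha(1+|\xi|)^{-|\alpha|}\le C_\alpha|\xi|^{-|\alpha|}$ for $\xi\neq 0$, so the Mihlin--H\"ormander theorem (as stated in the introduction) applies directly with $B=\max_{|\alpha|\le k}C_\alpha$. What you have written is instead the standard proof of that multiplier theorem itself, unpacked in the smooth all-orders case: Littlewood--Paley decomposition, the kernel bounds $|k_j(x)|\le C_N 2^{jn}(1+2^j|x|)^{-N}$ and their gradient analogues, the two-regime summation giving $|k(x)|\le C|x|^{-n}$ and $|\nabla k(x)|\le C|x|^{-n-1}$, the H\"ormander integral condition, Calder\'on--Zygmund weak-$(1,1)$, Marcinkiewicz interpolation, and duality via $\sigma(D)^*=\overline{\sigma}(D)$. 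All of these steps are sound; the only comment is that you are doing strictly more work than necessary given that the multiplier theorem is available as a black box, though your self-contained version has the merit of making explicit where the hypothesis $|D^\alpha\sigma(\xi)|\le C_\alpha(1+|\xi|)^{-|\alpha|}$ is actually used.
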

\noindent A main role in the proof of the previous theorem is played by the following
Mihlin-H\"{o}rmander Theorem, see the references mentioned above:
\begin{theorem*}
Let $t\in\CDIFF{k}(\R^n\setminus\{0\})$, $k>n/2$. Assume that there exists a positive constant $B$ such that
\begin{align*}
    |(D^{\alpha}t)(\xi)|\le B|\xi|^{-|\alpha|},
    \quad
    \xi\neq 0,
\end{align*}
for any $\alpha\in\Z^n_+$, $|\alpha|\le k$.
Then, for $1<p<\infty$, there exists a positive constant $C$, depending only on $p$ and $n$, such that
\begin{align*}
    \LNORM{Tu}{p}{\R^n}\le CB\LNORM{u}{p}{\R^n},
    \quad
    u\in\SSCH(\R^n),
\end{align*}
where $T$ is the pseudo differential operator \eqref{eq:defpsido} with symbol $t$.
\end{theorem*}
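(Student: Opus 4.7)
The plan is to follow the classical Calder\'on--Zygmund route: prove that $T$ is of weak type $(1,1)$ and bounded on $L^2$, then obtain $L^p$ for $1<p\le 2$ by Marcinkiewicz interpolation and extend to $2\le p<\infty$ by duality, since the adjoint $T^*$ has symbol $\overline{t}$ which satisfies the same hypothesis.

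The $L^2$ estimate is immediate from Plancherel: taking $\alpha=0$ in the Mihlin bound yields $|t(\xi)|\le B$, so $\LNORM{Tu}{2}{\R^n}\le B\LNORM{u}{2}{\R^n}$. For the weak $(1,1)$ estimate one must show that the convolution kernel $K=\opFT{t}$ (defined at first in the distributional sense away from the origin) satisfies the H\"ormander regularity condition
\begin{equation*}
    \sup_{y\neq 0}\int_{|x|\ge 2|y|}|K(x-y)-K(x)|\,dx\le CB.
\end{equation*}
Once this is established, the standard Calder\'on--Zygmund decomposition of $f\in L^1(\R^n)$ at a given level $\alpha>0$ into a good and a bad part yields $|\{x:|Tf(x)|>\alpha\}|\le CB\alpha^{-1}\LNORM{f}{1}{\R^n}$.

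The main technical step, and the one on which the hypothesis $k>n/2$ is concretely used, is the verification of the kernel condition through a Littlewood--Paley decomposition. Choose a smooth radial partition of unity $1=\sum_{j\in\Z}\varphi_j(\xi)$ with $\SUPP\varphi_j\subset\{2^{j-1}\le|\xi|\le 2^{j+1}\}$ obtained by dilating a single bump $\varphi$, and set $t_j=t\,\varphi_j$ with kernel $K_j=\opFT{t_j}$. The Mihlin estimates on $t$, combined with the scaling of $\varphi_j$, give
\begin{equation*}
    |D^\alpha t_j(\xi)|\le C_\alpha B\, 2^{-j|\alpha|}\mathbf{1}_{\{2^{j-1}\le|\xi|\le 2^{j+1}\}},\qquad |\alpha|\le k.
\end{equation*}
By Plancherel applied to $x^\beta K_j$ for $|\beta|\le k$, together with the fact that the support of $t_j$ has measure $\simeq 2^{jn}$, one obtains the two weighted $L^2$ bounds $\LNORM{K_j}{2}{\R^n}^2\le CB^2 2^{jn}$ and $\LNORM{\,|\cdot|^k K_j}{2}{\R^n}^2\le CB^2 2^{j(n-2k)}$. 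Cauchy--Schwarz then yields the sharp pointwise-in-scale $L^1$ estimates
\begin{equation*}
    \int_{|x|\le 2^{-j}}|K_j(x)|\,dx\le CB,\qquad \int_{|x|\ge R}|K_j(x)|\,dx\le CB(2^{j}R)^{n/2-k},
\end{equation*}
and analogous estimates for $\nabla K_j$ with an extra factor $2^j$. Summing in $j$ in the two regimes $2^j|y|\le 1$ (use the gradient estimate and the mean value theorem) and $2^j|y|>1$ (estimate $K_j(x-y)$ and $K_j(x)$ separately) gives the required H\"ormander condition; the geometric series converges precisely because $k>n/2$.

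The hardest point is keeping the dyadic bookkeeping clean: one must carefully balance the two regimes in the summation over $j$, and for this the strict inequality $k>n/2$ is essential, both to make $\int|K_j|$ summable outside $|x|\ge R$ and to ensure that the series built from the gradient estimate for small $|y|$ converges. Once this kernel analysis is completed, the passage from weak $(1,1)$ and $L^2$ to $L^p$ for all $1<p<\infty$ is routine by Marcinkiewicz interpolation and duality, and the constant $C$ depends only on $p$ and $n$ since $B$ is factored out of every estimate above.
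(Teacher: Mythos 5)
The paper does not actually prove this statement: it is quoted in the introduction as the classical Mihlin--H\"ormander multiplier theorem, with the proof deferred to the cited literature (H\"ormander's 1960 Acta paper, Stein's books, Wong's book). Your argument is exactly the standard proof given in those references --- $L^2$ boundedness from Plancherel, verification of the H\"ormander integral condition for the kernel via a dyadic Littlewood--Paley decomposition and the two weighted $L^2$ estimates $\NORM{K_j}_{L^2}^2\le CB^2 2^{jn}$ and $\NORM{|\cdot|^k K_j}_{L^2}^2\le CB^2 2^{j(n-2k)}$, weak $(1,1)$ by Calder\'on--Zygmund decomposition, then Marcinkiewicz interpolation and duality --- and your bookkeeping is correct: the hypothesis $k>n/2$ enters precisely where you say it does, in the convergence of $\int_{|x|\ge R}|x|^{-2k}\,dx$ and of the two geometric series over the scales $2^j|y|\le 1$ and $2^j|y|>1$. (The only implicit assumption is that $k$ is an integer, so that the weighted Plancherel identity for $x^\beta K_j$ applies directly; this is consistent with the statement's condition $|\alpha|\le k$.) The proposal is therefore correct and coincides with the canonical proof the paper implicitly relies on.
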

\noindent
The investigation of multiplier theorems of Mihlin type is a field of active research: such results can be proved in  settings different from the one recalled above, see, e.g., H.~Amann \cite{A97}, M.~Girardi and L.~Weis \cite{GW03}, T.~Hyt\"{o}nen \cite{Hy04} and the references quoted therein.\\

The definition of pseudodifferential operator has been extended to many other (also non-smooth) 
symbol classes. For instance, in R. Beals \cite{Bea75}, a symbol $a(x,\xi)$ belongs to the class
$S^{\lambda}_{\Phi,\varphi}(\R^n)$, associated with the ``order'' $\lambda$ and the 
``weight functions'' $\Phi,\varphi$, if it satisfies the estimates
\begin{align}
\label{MM07_in_eq01}
    \bigl|D^{\alpha}_{\xi}D^{\beta}_{x}a(x,\xi)\bigr|\le
    C_{\alpha\beta}e^{\lambda(x,\xi)}\Phi(x,\xi)^{-|\alpha|}\varphi(x,\xi)^{-|\beta|},
    \quad (x,\xi)\in\R^n\times \R^n,
\end{align}
with $\lambda, \Phi, \varphi$ fulfilling suitable hypotheses. Clearly, when 
$\varphi(x,\xi)=(1+|\xi|)^{-\delta}$, $\Phi(x,\xi)=(1+|\xi|)^{\rho}$ and $\lambda(x,\xi)=m\ln(1+|\xi|)$,
the class $S^{\lambda}_{\Phi,\varphi}(\R^n)$
coincides with the class $S^m_{\rho,\delta}(\R^n)$ recalled above.
Also with the symbols in $S^{\lambda}_{\Phi,\varphi}(\R^n)$ it is possible to associate the
corresponding pseudodifferential operators \eqref{eq:defpsido}, and similar results for the
continuity on $L^2(\R^n)$ can be obtained. L. Rodino \cite{Ro76} studied a class
of pseudodifferential operators defined by means of amplitudes $c(x,y,\xi)$ rather than symbols,
satisfying weighted estimates similar to \eqref{MM07_in_eq01},
and investigated corresponding conditions for their $L^2(\R^n)$-boundedness.
L. H\"{o}rmander \cite{Hor79} has considered an even further generalization of the 
pseudodifferential calculus on $\R^n$, see also \cite{HorALPDO}.\\

To get closer to the results proved the present paper, we recall the definition
of the multiplier class $S_\psi$ considered by R. Beals in \cite{Bea79}:
\begin{definition*}
	\label{def:Spsi}
	Let $\psi$ be a non-decreasing, positive function on $\R^n$. 
	$S_{\psi}$ denotes the space of symbols $\sigma\in\CINF(\R^n)$ such that,
	for $\alpha\in\Z^n_+$, there exists a positive constant $C_{\alpha}$, depending only on $\alpha$,
	such that
	\begin{align*}
    		\bigl|D^{\alpha}\sigma(\xi)\bigr|\le C_{\alpha}\psi(|\xi|)^{-|\alpha|},
    		\quad
    		\xi\in\R^n.
	\end{align*}
\end{definition*}
\noindent
In that same paper, the following theorem of $L^p(\R^n)$-boundedness for operators with symbols
in $S_\psi$ was proved:
\begin{theorem*}
	Let $1<p<\infty$, $p\neq 2$.
	A necessary and sufficient condition to have that any pseudodifferential operator with symbol
	$\sigma\in S_{\psi}$ is a $\LP{p}(\R^n)$-multiplier is that there exists $\delta>0$ such that
	\begin{align*}
    		t^{-1}\psi(t)\ge\delta, \quad t>0.
	\end{align*}
\end{theorem*}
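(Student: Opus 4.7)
The plan is to treat the two implications separately. For sufficiency, assume $t^{-1}\psi(t)\ge\delta>0$ for all $t>0$. Then for any $\sigma\in S_\psi$ and multi-index $\alpha$,
\[
|D^\alpha\sigma(\xi)|\le C_\alpha\psi(|\xi|)^{-|\alpha|}\le C_\alpha\delta^{-|\alpha|}|\xi|^{-|\alpha|}\quad(\xi\neq 0),
\]
which is precisely the hypothesis of the Mihlin--H\"ormander Theorem recalled above (with $k=\lfloor n/2\rfloor+1$ and $B=\max_{|\alpha|\le k}C_\alpha\delta^{-|\alpha|}$). Hence $\sigma(D)$ extends to a bounded operator on $L^p(\R^n)$ for every $1<p<\infty$; in fact this gives sufficiency even at $p=2$.

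For necessity I would argue by contrapositive. Assume $\inf_{t>0}t^{-1}\psi(t)=0$. Since $\psi$ is positive and non-decreasing, $\psi(t)/t\to+\infty$ as $t\to 0^+$, so the obstruction must lie at infinity: one extracts $t_k\to+\infty$ with $\epsilon_k:=\psi(t_k)/t_k\to 0$, and, after thinning, may suppose the annuli $A_k=\{t_k/2<|\xi|<2t_k\}$ pairwise disjoint. Equip $S_\psi$ with its natural Fr\'echet topology generated by the seminorms $|\sigma|_N=\max_{|\alpha|\le N}\sup_{\xi\in\R^n}\psi(|\xi|)^{|\alpha|}|D^\alpha\sigma(\xi)|$. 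Under the hypothesis that every $\sigma\in S_\psi$ is an $L^p$-multiplier, the linear map $\sigma\mapsto\sigma(D)$ from $S_\psi$ to $\LINEAR(L^p(\R^n),L^p(\R^n))$ has closed graph (continuity of $\sigma\mapsto \FT{\sigma(D)u}=\sigma\FT{u}$ on Schwartz data is immediate), so by the closed graph theorem there exist $N\in\N$ and $C>0$ with $\|\sigma(D)\|_{L^p\to L^p}\le C\,|\sigma|_N$ for every $\sigma\in S_\psi$. The goal is to contradict this uniform bound.

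Fix a radial bump $\chi\in\CCINF(\R^n)$ supported in $\{1/2<|\eta|<2\}$ and set
\[
\sigma_k(\xi)=\chi(\xi/t_k)\exp\!\bigl(i\lambda_k|\xi|^{1/2}\bigr),\qquad \lambda_k=\epsilon_k^{-1}t_k^{-1/2}.
\]
On $A_k$ the dominant term in $D^\alpha\sigma_k$ is $(iD\phi_k)^\alpha e^{i\phi_k}$ with $\phi_k=\lambda_k|\xi|^{1/2}$, of order $(\lambda_k |\xi|^{-1/2})^{|\alpha|}\asymp(\epsilon_k t_k)^{-|\alpha|}=\psi(t_k)^{-|\alpha|}\asymp\psi(|\xi|)^{-|\alpha|}$; the remaining terms pick up extra factors of $t_k^{-1}\le\psi(t_k)^{-1}$ from $\chi$ or from higher derivatives of $|\xi|^{1/2}$ and are of the same order or smaller, so $\sup_k|\sigma_k|_N<\infty$ for every $N$. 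Under the dilation $\xi=t_k\eta$, which preserves $L^p$-multiplier norms, $\sigma_k$ transforms into $\tilde\sigma_k(\eta)=\chi(\eta)\exp(i\mu_k|\eta|^{1/2})$ with rescaled oscillation parameter $\mu_k=\lambda_k t_k^{1/2}=\epsilon_k^{-1}\to+\infty$. The classical oscillatory-multiplier estimates of Hirschman--Wainger--Fefferman--Stein for symbols $\chi(\eta)e^{i\mu|\eta|^a}$, $0<a<1$, then furnish the lower bound $\|\tilde\sigma_k(D)\|_{L^p\to L^p}\ge c\,\mu_k^{n|1/p-1/2|}$ for $p\neq 2$, so $\|\sigma_k(D)\|_{L^p\to L^p}\ge c\,\epsilon_k^{-n|1/p-1/2|}\to+\infty$, contradicting the uniform estimate.

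The main obstacle is the tight compatibility between the two competing requirements: the $S_\psi$-constraint caps the phase amplitude at $\lambda_k\lesssim\epsilon_k^{-1}t_k^{-1/2}$, while divergence of the multiplier norm demands the rescaled frequency $\mu_k=\lambda_k t_k^{1/2}$ to tend to infinity. The vanishing of $\epsilon_k$ is precisely what makes these two requirements simultaneously satisfiable. The analytic heart of the argument is the quantitative lower bound for the $L^p\to L^p$ operator norm of a Hirschman-type oscillatory multiplier, a Knapp/stationary-phase estimate that is vacuous when $p=2$; this is consistent with (and explains) the restriction $p\neq 2$ in the statement.
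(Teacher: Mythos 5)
Your argument is correct in outline, but for the necessity half it takes a genuinely different route from the one the paper uses to recover this theorem of Beals. The sufficiency step (monotonicity of $\psi$ plus $t^{-1}\psi(t)\ge\delta$ feeding directly into the Mihlin--H\"ormander theorem) is exactly what the paper intends, and your closed-graph reduction to a uniform bound $\NORM{\sigma(D)}_{\LINEAR(\LP{p}(\R^n))}\le C\,p_N(\sigma)$ is the same as the proof of Theorem \ref{MM07_in_cj00}. Where you diverge is in the family of symbols violating that bound: you rescale a Hirschman--Wainger oscillatory multiplier $\chi(\eta)e^{i\mu_k|\eta|^{1/2}}$ and invoke the classical stationary-phase lower bound $c\,\mu_k^{n|1/p-1/2|}$ for its multiplier norm, whereas the paper (following Beals) builds the symbol as a lattice sum of modulated bumps $\Phi_k(\eta)=\sum_\gamma e^{-i\PRS{\gamma}{\eta}}\varphi(\eta-L\gamma)$ and tests it on the trigonometric sums $g_N$, $h_N$ of \eqref{MM07_s01_eq06}, whose $\LP{p}$ norms are controlled elementarily via Dirichlet-kernel estimates (Appendix \ref{app:gnhn}). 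Your route is shorter but outsources its analytic core to a nontrivial classical theorem whose proof is itself a Knapp/stationary-phase computation; the paper's route is self-contained and, more importantly, its product structure (independent lattice counts $N_{k,j}$ in each coordinate) is precisely what permits the generalization to the anisotropic classes $\SSYM$ with distinct weights $\psi_j$, which a radial phase $|\eta|^{1/2}$ cannot accommodate. One imprecision you should repair: for a general non-decreasing $\psi$ the relation $\psi(|\xi|)\asymp\psi(t_k)$ on the full annulus $t_k/2<|\xi|<2t_k$ can fail (no doubling condition is assumed), so the derivative bound $|D^{\alpha}\sigma_k(\xi)|\le C\psi(t_k)^{-|\alpha|}$ does not by itself give membership in $S_\psi$ on the outer part $|\xi|>t_k$; supporting $\chi$ in $\{1/2<|\eta|<1\}$ and using only the one-sided inequality $\psi(|\xi|)\le\psi(t_k)$ fixes this without affecting the rest of the argument.
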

\noindent An immediate consequence of the previous theorem is the following
\begin{corollary*}
	If $\psi(t)=(1+t)^{\rho}$, then any symbol in $S_{\psi}$ gives rise to a $L^p(\R^n)$-multiplier 
	if and only if $\rho\ge 1$.
\end{corollary*}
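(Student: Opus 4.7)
The plan is to apply the preceding theorem (which, for the ``only if'' direction, carries the implicit assumption $1<p<\infty$, $p\neq 2$) to the concrete choice $\psi(t)=(1+t)^\rho$. This reduces the corollary to determining for which values of $\rho\in\R$ the function
\[
    g(t)=t^{-1}\psi(t)=t^{-1}(1+t)^\rho, \quad t>0,
\]
is bounded below by a strictly positive constant on $(0,+\infty)$.

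First, I would observe that $g$ is continuous and strictly positive on $(0,+\infty)$, so any failure of a uniform lower bound must come from the asymptotics at the endpoints $t\to 0^+$ and $t\to+\infty$. Since $g(t)\sim t^{-1}\to+\infty$ as $t\to 0^+$, there is no issue near the origin, and the whole question reduces to the behavior at infinity, where $g(t)\sim t^{\rho-1}$.

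For the sufficiency direction ($\rho\ge 1$), I would use the elementary estimate $(1+t)^\rho\ge 1+t\ge t$, giving $g(t)\ge 1$ for every $t>0$, so the hypothesis of the preceding theorem is met with $\delta=1$, and every symbol in $S_\psi$ is an $\LP{p}(\R^n)$-multiplier. For the necessity ($\rho<1$), the asymptotic $g(t)\sim t^{\rho-1}\to 0$ as $t\to+\infty$ prevents any positive $\delta$ from bounding $g$ from below, and the preceding theorem then provides a symbol in $S_\psi$ which fails to be an $\LP{p}(\R^n)$-multiplier.

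There is essentially no obstacle here: the corollary is a one-line specialization of the previous theorem. The only small subtlety is to keep in mind the hypothesis $p\neq 2$ inherited from that theorem, since for $p=2$ every $\sigma\in S_\psi\subset L^\infty(\R^n)$ is automatically an $\LP{2}(\R^n)$-multiplier by Plancherel's identity, independently of $\rho$.
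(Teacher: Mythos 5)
Your proposal is correct and matches the paper's intent exactly: the paper presents this corollary as ``an immediate consequence of the previous theorem,'' and your verification that $t^{-1}(1+t)^{\rho}$ is bounded below by a positive constant precisely when $\rho\ge 1$ is the obvious computation being left implicit. Your remark about the inherited hypothesis $p\neq 2$ (with $p=2$ handled trivially via Plancherel since $S_{\psi}\subset L^{\infty}(\R^n)$) is also consistent with the paper's framing.
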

\noindent
The corollary was well-known: in fact, necessity followed by counterexamples by I.I. Hirschmann \cite{Hi56}
and S. Wainger \cite{Wa65}, while sufficiency was a consequence of the Marcin-kiewicz and
Mihlin-H\"ormander multiplier theorems. The same observation concerning the sufficiency of the
condition can be done for the theorem, where the new aspect was the necessity.\\

We will deal with an anisotropic generalization of the symbol class $S_\psi$, denoted by $\SSYM$.
We remark that ``anisotropic structures'' often arise in various contexts. In particular, they
are a typical feature in the analysis of Carnot-Carath\'eodory metrics and Carnot groups, as well as of the study of natural operators and functional spaces arising in sub-Riemannian geometry, see, e.g.,
Der-Chen~Chang, I.G.~Markina \cite{DCM08}, 
{M.~Gromov} \cite{Gr81}, {G.L.~Leonardi, R.~Monti} \cite{LeMo08}, J.~Mitchell \cite{Mit85}, R. Monti \cite{Mo01}, {R.~Monti, D.~Morbidelli} \cite{MoMo08}, and the corresponding reference lists. 
In particular, classes of anisotropic pseudodifferential operators, Sobolev spaces and Besov 
spaces have been investigated by many authors, see, e.g.,
{A.~B\'enyi, M.~Bownik} \cite{BeBo10}, G.~Grubb \cite{Gru91}, 
N.~Jacob \cite{Ja89}, H.-G.~Leopold \cite{Le86}, {A.~Nagel, E.M.~Stein} \cite{NS78}, 
M.~Yamazaki \cite{Ya83, Ya87}, and the references quoted therein.
\\

The elements of the symbol class $\SSYM$ which we consider in this paper, with ``order'' $\lambda$ and vector-valued ``weight'' $\psi$, are characterized as follows:
\begin{definition}
\label{MM07_s01_def00}
Let $\psi=(\psi_1,\ldots,\psi_n)$, $\psi_j\in\CO(\R^n)$ strictly positive, $j=1,\ldots,n$, and
$\lambda\in\CO(\R^n)$, strictly positive and bounded.
We denote by $\SSYM$ the space of functions $\sigma\in\CINF(\R^n)$
such that, for any $\alpha\in\Z^n_+$ there exists a non-negative constant
$C_{\alpha}$, satisfying
\begin{align}
\label{MM07_s01_eq01}
    \bigl|D^{\alpha}\sigma(\xi)\bigr|\le
    C_{\alpha}\lambda(\xi)\psi(\xi)^{-\alpha},
    \quad \xi\in\R^n,
\end{align}
where $\displaystyle\psi(\xi)^{-\alpha}=\prod_{i=1}^n\psi_i(\xi)^{-\alpha_i}$.
We call \EMPH{symbols} all the functions $\sigma\in\SSYM$.
\end{definition}
\noindent
Clearly, for $\lambda(\xi)=(1+|\xi|)^m$ and $\psi_j(\xi)=(1+|\xi|)^\rho$, $j=1,\dots,n$, 
$\SSYM=S^m_{\rho,0}(\R^n)$, while for $\lambda(\xi)\equiv 1$ and $\psi_1(\xi)=\dots=\psi_n(\xi)=\psi(|\xi|)$,
$\SSYM=S_\psi$.  
With the standard notation $\OP(\SSYM)$ we denote the space of pseudodifferential operators $\sigma(D)$ with symbol $\sigma\in\SSYM$.
As usual, we can introduce a family of seminorms $p_N$ on $\SSYM$, $N=1,2,3,\ldots$,
by considering the best constants $C_\alpha$ appearing in \eqref{MM07_s01_eq01}, namely
\begin{align*}
    p_N(\sigma)=
    \sum_{|\alpha|\le N}\sup_{\xi\in\R^n}\biggl\{
    \lambda(\xi)^{-1}\psi(\xi)^{\alpha}\bigl|D^{\alpha}\sigma(\xi)\bigr|
    \biggr\}, \quad \sigma\in\SSYM.
\end{align*}
It is immediate to verify that the family of seminorms $p_N$, $N=1, 2, \dots$, makes $\SSYM$
a Fr\'echet space and that the following results hold:
\begin{proposition}
	\begin{enumerate}
		\item $\SSYM$ is a closed subspace of $\CINF(\R^n)$.
		\item Let $\sigma,\tau\in\SSYM$. Then, $\sigma\tau\in\SSYM$.
		\item Any pseudodifferential operator $\sigma(D)\in\OP(\SSYM)$ is a linear continuous map
			\[
				\sigma(D)\colon\SSCH(\R^n)\to\CINF(\R^n).
			\]
	\end{enumerate}
\end{proposition}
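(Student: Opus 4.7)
The three assertions are routine consequences of the defining estimate \eqref{MM07_s01_eq01} and can be handled respectively by a pointwise diagonal limit, by the Leibniz rule, and by differentiation under the integral sign. I plan to address them in order, and expect no serious obstacle; the only step where the hypothesis that $\lambda$ is bounded is used in a nontrivial way is part (2), which in turn is used in part (3).

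For (1), I would show that the family $\{p_N\}$ makes $\SSYM$ a Fr\'echet space, i.e.\ a complete topological linear subspace of $\CINF(\R^n)$. Linearity of $\SSYM$ is immediate from the additivity of the bound in the constants $C_\alpha$. Given a Cauchy sequence $(\sigma_k)\subset\SSYM$, the strict positivity and continuity of $\lambda$ and of the $\psi_j$ imply that the weight $\lambda(\xi)^{-1}\psi(\xi)^{\alpha}$ is bounded above and below on every compact subset of $\R^n$; each $p_N$ therefore dominates the corresponding $\CINF$-seminorm on compacts, so $(\sigma_k)$ converges in $\CINF(\R^n)$ to some $\sigma$. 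Since Cauchy sequences are bounded, $p_N(\sigma_k)\le M_N$ uniformly in $k$, and passing to the pointwise limit in $\lambda(\xi)^{-1}\psi(\xi)^{\alpha}\bigl|D^\alpha\sigma_k(\xi)\bigr|\le M_{|\alpha|}$ gives $\sigma\in\SSYM$. A standard $\varepsilon/3$ argument then yields $p_N(\sigma_k-\sigma)\to 0$.

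For (2), the plan is to apply the Leibniz rule
\[
D^\alpha(\sigma\tau)=\sum_{\beta\le\alpha}\binom{\alpha}{\beta}D^\beta\sigma\cdot D^{\alpha-\beta}\tau
\]
and insert the individual bounds for $\sigma$ and $\tau$. The resulting pointwise estimate contains a factor $\lambda(\xi)^2$ in place of $\lambda(\xi)$, and here the boundedness of $\lambda$ is essential: it lets one absorb one factor of $\lambda$ into the constants, yielding $\bigl|D^\alpha(\sigma\tau)(\xi)\bigr|\le C'_\alpha\,\lambda(\xi)\psi(\xi)^{-\alpha}$ and hence $\sigma\tau\in\SSYM$.

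For (3), the first step is to deduce from \eqref{MM07_s01_eq01} with $\alpha=0$ and from the boundedness of $\lambda$ that $\sigma\in L^\infty(\R^n)$. For $u\in\SSCH(\R^n)$ the rapid decay of $\FT u$ then forces $\xi^\alpha\sigma(\xi)\FT{u}(\xi)\in L^1(\R^n)$ for every $\alpha\in\Z^n_+$. Differentiation under the integral in \eqref{eq:psido} is therefore legitimate and produces $\sigma(D)u\in\CINF(\R^n)$, together with a pointwise bound on $D^\alpha[\sigma(D)u](x)$ of the form $C\,\LNORM{\xi^\alpha\FT{u}}{1}{\R^n}$, with $C$ depending only on $p_0(\sigma)$ and $\NORM{\lambda}_{L^\infty(\R^n)}$. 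The right-hand side is controlled by a finite combination of Schwartz seminorms of $u$, which delivers continuity of $\sigma(D)\colon\SSCH(\R^n)\to\CINF(\R^n)$. The only slight subtlety is that $\sigma(D)u$ need not be Schwartz, since multiplication by $\sigma$ may destroy rapid decay when some $\psi_j(\xi)^{-1}$ is unbounded, but only $\CINF$-regularity is claimed here.
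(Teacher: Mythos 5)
The paper does not actually prove this proposition; it is stated right after the remark that the verification is ``immediate,'' so there is no argument of the authors' to compare yours against. Your three arguments are the standard ones and they are correct: the weighted-seminorm completeness argument for (1), the Leibniz rule combined with the boundedness of $\lambda$ for (2), and domination plus differentiation under the integral sign for (3). Two small remarks. First, read literally, item (1) (``closed subspace of $\CINF(\R^n)$'') fails in general: with $\lambda\equiv 1$ and $\psi_j\equiv 1$ the class $\SSYM$ consists of the smooth functions with all derivatives bounded, and the truncations $\chi_k(\xi)\,\xi_1$ (with $\chi_k$ cutoffs equal to $1$ on the ball of radius $k$) all lie in $\SSYM$ yet converge in $\CINF(\R^n)$ to $\xi\mapsto\xi_1\notin\SSYM$. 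Your reading --- that $\SSYM$ equipped with the seminorms $p_N$ is complete, hence Fr\'echet, and embeds continuously into $\CINF(\R^n)$ --- is the right one; it is what your Cauchy-sequence argument actually establishes, and it is precisely the property the paper needs in order to apply the Closed Graph Theorem in the proof of Theorem \ref{MM07_in_cj00}. Second, a slip in your opening paragraph: part (3) does not rely on part (2); it uses the boundedness of $\lambda$ directly through the bound $|\sigma(\xi)|\le C_0\lambda(\xi)$, exactly as your own argument for (3) then shows.
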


The present paper is devoted to proving a condition that must be satisfied in order to
let any element of $\OP(\SSYM)$ be a $L^p(\R^n)$-multiplier, in the spirit of the paper by
R. Beals \cite{Bea75}. Sufficient conditions, in the cases where
$\SSYM$ does not coincide with symbol classes already known in the literature, will be
treated in a subsequent paper, where we plan to adapt some of the techniques
used by L. Rodino in the paper \cite{Ro76} quoted above, whose reading has partly motivated
us to study these topics.\\

\noindent
We start by fixing some hypotheses on the ``shape'' of the ``balls'' associated with ``metric'' 
defined by the functions $\psi_i$, $i=1,\ldots,n$. We focus on the case $p\not=2$, since
the boundedness of the function $\lambda$ implies the $L^2(\R^n)$-continuity
of any operator in $\OP(\SSYM)$.
\begin{hyp}
\label{hyp:balls}
Let $1<p<\infty$, $p\not=2$. With any $\xi\in\R^n$ associate the set
\begin{align*}
    S(\xi)=\SET{\eta\in\R^n}{\psi_j(\eta)\le\psi_j(\xi), j=1,\ldots,n}.
\end{align*}
We assume that there exist suitable positive constants $c, C$,
independent of $\xi$, such that, for any $\xi\in\R^n$, $|\xi|\ge C$,
there exists a $n$-dimensional interval
\begin{align}
\label{MM07_s01_eq04}
    I(\xi)=\SET{\eta\in\R^n}{|\eta_j|\le l_j(\xi), j=1,\ldots,n},
\end{align}
with $l_j(\xi)\ge c\psi_j(\xi)$, $j=1,\ldots,n$, such that
\begin{align}
\label{MM07_s01_eq04c}
    I(\xi)\subseteq S(\xi) \qquad\text{and}\qquad \mu(S(\xi))\le C\mu(I(\xi)),
\end{align}
where $\mu$ is the Lebesgue measure on $\R^n$.
\end{hyp}

\noindent
Let us define the function $F_p(\xi)$ as
\begin{align}
\label{MM07_s01_eq05}
    F_p(\xi)=\biggl(\inf_{\eta\in S(\xi)}\lambda(\eta)\biggr)
    \biggl(\mu(S(\xi))\prod_{j=1}^n\psi_j(\xi)^{-1}
    \biggr)^{\ABS{\frac{1}{p}-\frac{1}{2}}}.
\end{align}
Note that, under the Assumptions \ref{hyp:balls}, $F_p(\xi)$ is well defined, and assumes non-negative real values for $|\xi|\ge C$. We can now state our main results:
\begin{theorem}
\label{MM07_in_th00}
Let Assumptions \ref{hyp:balls} be satisfied and let
the function $F_p(\xi)$ be unbounded. Then, the map
\begin{align*}
\begin{split}
  \OP: \SSYM  &\longrightarrow \LINEAR(\LP{p}(\R^n)) \\
       \sigma &\longmapsto \sigma(D)
\end{split}
\end{align*}
is unbounded. Namely, there exists a sequence of symbols $\{\sigma_k\} \subset \SSYM$
fulfilling \eqref{MM07_s01_eq01} with constants $C_{\alpha}$ independent of $k$, such that
\begin{align*}
    \lim_{k\rightarrow\infty}\NORM{\sigma_k(D)}_{\LINEAR(\LP{p}(\R^n))}=\infty.
\end{align*}
\end{theorem}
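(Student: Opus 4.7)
The plan is a Rademacher--Khintchine construction, in the spirit of the counterexamples behind the $L^p$ results for the isotropic class $S_\psi$ quoted above. Since $F_p$ is unbounded, I pick $\{\xi_k\}\subset\R^n$ with $|\xi_k|\ge C$ and $F_p(\xi_k)\to\infty$, and for each $k$ a point $\eta_k\in S(\xi_k)$ with $\lambda(\eta_k)\le 2\inf_{S(\xi_k)}\lambda$. Writing $D_k=\mathrm{diag}(\psi_1(\xi_k),\ldots,\psi_n(\xi_k))$ and $L_k=\mathrm{diag}(l_1(\xi_k),\ldots,l_n(\xi_k))$ as in Assumptions~\ref{hyp:balls}, the ratio $M_k:=|\det L_k|/|\det D_k|$ is comparable to $\mu(S(\xi_k))\prod_j\psi_j(\xi_k)^{-1}$. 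Fix once and for all a real bump $\phi\in\CCINF(\R^n)$ supported in $[-1,1]^n$ with $\phi\not\equiv 0$, and an auxiliary $\tilde\phi_u\in\CCINF(\R^n)$ that equals $1$ on $[-1,1]^n$ and is supported in $[-2,2]^n$. Pack $N_k\sim M_k$ grid centers $\eta_k^{(1)},\ldots,\eta_k^{(N_k)}\in I(\xi_k)$ (step $3\psi_j(\xi_k)$ in the $j$-th direction) so that the sets $\eta_k^{(r)}+D_k[-1,1]^n$ are pairwise disjoint and contained in $I(\xi_k)$, and, for each sign vector $\epsilon\in\{\pm1\}^{N_k}$, set
\begin{equation*}
    \sigma_k^\epsilon(\xi)
    = \lambda(\eta_k)\sum_{r=1}^{N_k}\epsilon_r\,\phi\bigl(D_k^{-1}(\xi-\eta_k^{(r)})\bigr).
\end{equation*}

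The seminorms $p_N(\sigma_k^\epsilon)$ are uniformly bounded in $k,\epsilon$: the disjoint supports reduce each derivative to a single summand, so $|D^\alpha\sigma_k^\epsilon(\xi)|\le C_\alpha\lambda(\eta_k)\prod_j\psi_j(\xi_k)^{-\alpha_j}$, while on $\SUPP\sigma_k^\epsilon\subseteq S(\xi_k)$ one has $\psi_j(\xi_k)\ge\psi_j(\xi)$ (by the definition of $S$) and $\lambda(\eta_k)\le 2\lambda(\xi)$ (by the choice of $\eta_k$), delivering \eqref{MM07_s01_eq01} with constants independent of $k,\epsilon$.

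For the multiplier lower bound I test against $u_k$ with $\FT{u_k}(\xi)=\tilde\phi_u(L_k^{-1}\xi)$: this is identically $1$ on $I(\xi_k)$, hence $\sigma_k^\epsilon(D)u_k=(\sigma_k^\epsilon)^{\vee}$. A scaling computation gives
\begin{equation*}
    \bigl|(\sigma_k^\epsilon)^{\vee}(x)\bigr|
    = \lambda(\eta_k)\,|\det D_k|\,|\phi^{\vee}(D_k x)|\,
      \Bigl|\sum_{r=1}^{N_k}\epsilon_r\,e^{i\PRS{x}{\eta_k^{(r)}}}\Bigr|.
\end{equation*}
Averaging the $p$-th power over $\epsilon\in\{\pm1\}^{N_k}$ with equal probability and applying Khintchine's inequality pointwise in $x$ to the sum of unimodular numbers produces a lower bound of order $N_k^{p/2}\lambda(\eta_k)^p|\det D_k|^{p-1}\LNORM{\phi^{\vee}}{p}{\R^n}^p$; so some sign choice $\epsilon^*=\epsilon^*(k)$ realizes $\LNORM{\sigma_k^{\epsilon^*}(D)u_k}{p}{\R^n}\gtrsim N_k^{1/2}\lambda(\eta_k)\,|\det D_k|^{1-1/p}$. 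Since a further change of variables yields $\LNORM{u_k}{p}{\R^n}\sim|\det L_k|^{1-1/p}\sim N_k^{1-1/p}|\det D_k|^{1-1/p}$, dividing gives
\begin{equation*}
    \NORM{\sigma_k^{\epsilon^*}(D)}_{\LINEAR(\LP{p}(\R^n))}
    \gtrsim\lambda(\eta_k)\,N_k^{1/p-1/2}.
\end{equation*}

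For $1<p\le 2$ the right-hand side is comparable to $F_p(\xi_k)$ and diverges. For $p>2$, since $\sigma_k^{\epsilon^*}$ is real-valued, the duality identity $\NORM{m(D)}_{\LINEAR(\LP{p})}=\NORM{m(D)}_{\LINEAR(\LP{p'})}$ for real multipliers lets me re-run the argument above with exponent $p'<2$ and deduce the same conclusion; setting $\sigma_k:=\sigma_k^{\epsilon^*(k)}$ then produces the sequence claimed. The main technical hurdle I anticipate is the uniform $\SSYM$-seminorm bound, which relies on showing that the anisotropic scaling by $D_k$, the disjoint-support structure of the $N_k$ summands, and the monotonicity of $\psi_j$ and $\lambda$ on $S(\xi_k)$ jointly absorb all $N_k$-dependence; the second delicate point is extracting the absolute value in the exponent $|1/p-1/2|$, which is exactly what the Khintchine step combined with the $L^p/L^{p'}$ duality symmetry provides, mirroring the Hausdorff--Young heuristic behind such counterexamples.
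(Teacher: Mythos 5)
Your proof is correct, and it takes a genuinely different route from the one in the paper. Both arguments reduce to $1<p<2$ by duality, localize the symbol to the interval $I(\xi^{(k)})$ of Assumptions \ref{hyp:balls}, and use the coefficient $\lambda_k=\inf_{S(\xi^{(k)})}\lambda$ together with the monotonicity $\psi_j(\xi)\le\psi_j(\xi^{(k)})$ on $S(\xi^{(k)})$ to obtain the uniform $\SSYM$-seminorm bounds; the difference lies in how the $L^p$-growth is produced. The paper's symbol is the deterministic periodization $\Phi_k(\eta)=\sum_\gamma e^{-i\PRS{\gamma}{\eta}}\varphi(\eta-L\gamma)$ (anisotropically rescaled), whose phases are engineered so that $\Phi_k\FT{g}_{N_k}=\FT{h}_{N_k}$, i.e.\ the multiplier translates each frequency block of the concentrated trigonometric polynomial $g_{N_k}$ to produce the spread-out polynomial $h_{N_k}$; the gain $\bigl(\prod_jN_{k,j}\bigr)^{2/p-1}$ then rests on the estimates \eqref{MM07_s01_eq06b}, i.e.\ on the Dirichlet-kernel bounds of Lemmas \ref{lemma:f2}--\ref{lemma:DM} and the normalization \eqref{MM07_s01_eq07} of $f$. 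You instead use randomly signed disjoint bumps tested against a single dilated bump, and extract the gain $N_k^{1/p-1/2}$ from Khintchine's lower bound $\mathbb{E}_\epsilon\bigl|\sum_r\epsilon_ra_r\bigr|^p\ge c_p\bigl(\sum_r|a_r|^2\bigr)^{p/2}$ applied pointwise; since the paper's parameters satisfy $\prod_jN_{k,j}\sim M_k^{1/2}$ while your $N_k\sim M_k$, the two constructions yield the identical divergence rate $F_p(\xi^{(k)})$. Your version is shorter and bypasses the trigonometric-polynomial machinery of Appendix \ref{app:gnhn}, at the price of a nonconstructive choice of signs. Two details you should spell out: the packing bound $N_k\gtrsim M_k$ must account for the possibility that the constant $c$ in $l_j(\xi^{(k)})\ge c\psi_j(\xi^{(k)})$ is smaller than $1$ (rescale the support of $\phi$ by $c$ so that at least one box fits in each coordinate direction, and check the count is still $\gtrsim l_j(\xi^{(k)})\psi_j(\xi^{(k)})^{-1}$); and the point $\eta_k$ with $\lambda(\eta_k)\le 2\inf_{S(\xi^{(k)})}\lambda$ exists only because that infimum is strictly positive, which holds for large $k$ precisely because $F_p(\xi^{(k)})\to\infty$.
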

\noindent Theorem \ref{MM07_in_th00} is the main step of the argument showing our necessary condition for the $\LP{p}(\R^n)$-continuity of any operator with symbol in the class $\SSYM$, namely, the boundedness of the function $F_p(\xi)$. The proof of the next theorem is easily obtained by contradiction, via a standard application of the Closed Graph Theorem, see Section \ref{MM07_s01_mt00} below:
\begin{theorem}
\label{MM07_in_cj00}
Let Assumptions \ref{hyp:balls} be satisfied and let the function $F_p(\xi)$ be unbounded. Then, there exists
a symbol $\sigma\in\SSYM$ such that
\begin{align*}
    \NORM{\sigma(D)}_{\LINEAR(\LP{p}(\R^n))}=\infty.
\end{align*}
Equivalently, under Assumptions \ref{hyp:balls},
\begin{align*}
	\OP(\SSYM)\subset\LINEAR(L^p(\R^n))\text{ }\Rightarrow \text{ $F_p$ is bounded}.
\end{align*}
\end{theorem}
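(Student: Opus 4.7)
The plan is to argue by contradiction using the Closed Graph Theorem, as suggested in the statement. Suppose Assumptions \ref{hyp:balls} hold, $F_p$ is unbounded, and yet (contrary to the conclusion) every $\sigma\in\SSYM$ yields a bounded operator $\sigma(D)\in\LINEAR(\LP{p}(\R^n))$. Then the linear assignment

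\[
\Phi\colon\SSYM\longrightarrow\LINEAR(\LP{p}(\R^n)),\qquad \sigma\longmapsto\sigma(D),
\]

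is well defined from a Fréchet space into a Banach space, so the Closed Graph Theorem will apply as soon as I verify that its graph is closed.

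The closed-graph verification is the only substantive step, and it reduces to a standard density argument. Assume $\sigma_k\to\sigma$ in $\SSYM$ and $\sigma_k(D)\to T$ in $\LINEAR(\LP{p}(\R^n))$. The seminorm $p_0$ bounds $|\sigma_k(\xi)|$ by a fixed multiple of $\lambda(\xi)$, and since $\lambda$ is bounded by hypothesis the sequence $\{\sigma_k\}$ is uniformly bounded; convergence in $p_0$ also gives $\sigma_k\to\sigma$ uniformly on compact sets. For any fixed $u\in\SSCH(\R^n)$, dominated convergence yields $\sigma_k\FT{u}\to\sigma\FT{u}$ in $\LP{1}(\R^n)$, so $\sigma_k(D)u\to\sigma(D)u$ uniformly, and in particular in $\SSCH^\prime(\R^n)$. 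Combined with the $\LP{p}$-convergence $\sigma_k(D)u\to Tu$, this forces $Tu=\sigma(D)u$ for every $u\in\SSCH(\R^n)$; by density of $\SSCH(\R^n)$ in $\LP{p}(\R^n)$, $T$ coincides with the $\LP{p}$-extension of $\sigma(D)$, and the graph is closed.

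The Closed Graph Theorem then provides $N\in\N$ and $C>0$ with

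\[
\NORM{\sigma(D)}_{\LINEAR(\LP{p}(\R^n))}\le C\,p_N(\sigma),\qquad \sigma\in\SSYM.
\]

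However, Theorem \ref{MM07_in_th00} furnishes a sequence $\{\sigma_k\}\subset\SSYM$ that satisfies \eqref{MM07_s01_eq01} with constants $C_\alpha$ independent of $k$ (so that $\sup_k p_N(\sigma_k)<\infty$ for every $N$), while $\NORM{\sigma_k(D)}_{\LINEAR(\LP{p}(\R^n))}\to\infty$. This is incompatible with the displayed estimate, yielding the required contradiction and thus the existence of a symbol $\sigma\in\SSYM$ whose associated multiplier is not $\LP{p}$-bounded. The equivalent contrapositive formulation stated at the end of the theorem is then immediate. The only delicate point in the whole argument is the closed-graph step above; everything else is a transparent combination of Theorem \ref{MM07_in_th00} with the abstract functional-analytic principle.
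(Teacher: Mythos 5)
Your proof is correct and follows essentially the same route as the paper: a contradiction argument combining the Closed Graph Theorem (from the Fr\'echet space $\SSYM$ into the Banach space $\LINEAR(\LP{p}(\R^n))$) with the bounded sequence of symbols produced by Theorem \ref{MM07_in_th00}. The only difference is that you spell out the closed-graph verification, which the paper leaves as ``easy to check''; your argument for that step is sound.
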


In the next Section \ref{MM07_s01_mt00} we give the detailed proofs of Theorems \ref{MM07_in_th00} and \ref{MM07_in_cj00}. Some corollaries and remarks are then discussed in Section \ref{MM07_s01_mt01}. 

\section*{Acknowledgements} 

The authors wish to thank Prof. L. Rodino and Prof. E. Schrohe, for the very useful discussions,
their suggestions and their encouragement throughout the whole development of this paper. Thanks are
also due to Prof. T. Gramtchev, for his comments and suggestions. The first author was partially supported by the PRIN Project ``Operatori Pseudo-Differenziali ed Analisi Tempo-Frequenza'' (Director of the national project: G. Zampieri; local supervisor at Università di Torino: L. Rodino).
The first author also gratefully acknowledges the support from the
Institut f\"ur Analysis, Fakult\"at f\"ur Mathematik und Physik, 
Gottfried Wilhelm Leibniz Universit\"at Hannover,
during his stay as Visiting Scientist in the Academic Year 2011/2012,
where these researches have been completed.

\section{Proof of Theorems \ref{MM07_in_th00} and \ref{MM07_in_cj00}}
\label{MM07_s01_mt00}
For an open subset $\Omega$ of $\R^n$, we denote, as usual, 
by $\CCINF(\Omega)$ the subspace of all the smooth functions 
defined on $\Omega$ whose support is compact.
\begin{lemma}
\label{MM07_s01_lm00} 
Let $f\in\SSCH(\R)$ with $\FT{f}\in\CCINF(\R)$
satisfy $1=f(0)\ge f(t)\ge 0$ for all $t\in\R$.
Then, $f$ can be chosen so that
\begin{align}
\label{MM07_s01_eq07}
    \sum_{k\in\Z\setminus\{0\}}f(t-k)\le\frac{1}{3}
    \qquad\text{if}\qquad
    |t|\le\frac{1}{2}.
\end{align}
\end{lemma}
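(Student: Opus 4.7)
The plan is to produce such an $f$ in two steps: first, construct a base function $f_{0}\in\SSCH(\R)$ with $\FT{f_{0}}\in\CCINF(\R)$, $f_{0}\ge 0$, and $f_{0}(0)=1=\max f_{0}$; then compress it by a dilation so that its rapid (Schwartz) decay suppresses the contributions away from $0$ in the periodic sum on $|t|\le 1/2$.

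\textbf{Step 1 (base function).} Fix any non-trivial, even, non-negative $g\in\CCINF(\R)$ and let
\[
h(t)=\frac{1}{2\pi}\int_{\R}g(\xi)\,e^{it\xi}\,d\xi,
\]
so that $h\in\SSCH(\R)$ is real-valued and even, with $\FT{h}=g$. Define $f_{0}(t)=h(t)^{2}/h(0)^{2}$. Since $g\ge 0$, one has $h(0)=(2\pi)^{-1}\int_{\R}g(\xi)\,d\xi>0$ and $|h(t)|\le h(0)$, so $f_{0}\in\SSCH(\R)$ with $0\le f_{0}(t)\le 1=f_{0}(0)$ for every $t$. Moreover, with the usual convention $\widehat{uv}=(2\pi)^{-1}\FT{u}*\FT{v}$,
\[
\FT{f_{0}}=\frac{1}{2\pi\,h(0)^{2}}\,g*g\in\CCINF(\R),\qquad \SUPP\FT{f_{0}}\subseteq\SUPP g+\SUPP g.
\]

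\textbf{Step 2 (dilation).} For $\delta>0$ to be chosen, set $f(t)=f_{0}(\delta t)$. Clearly $f\in\SSCH(\R)$, $\FT{f}(\xi)=\delta^{-1}\FT{f_{0}}(\xi/\delta)\in\CCINF(\R)$, and the pointwise conditions $1=f(0)\ge f(t)\ge 0$ are inherited from $f_{0}$. Since $f_{0}\in\SSCH(\R)$, for every integer $N\ge 2$ there is $C_{N}>0$ with $f_{0}(s)\le C_{N}(1+|s|)^{-N}$. For $|t|\le 1/2$ and $k\in\Z\setminus\{0\}$ one has $|t-k|\ge|k|/2$, hence
\[
\sum_{k\in\Z\setminus\{0\}}f(t-k)=\sum_{k\ne 0}f_{0}\bigl(\delta(t-k)\bigr)\le C_{N}\sum_{k\ne 0}\bigl(1+\delta|k|/2\bigr)^{-N}\le C'_{N}\,\delta^{-N},
\]
which is $\le 1/3$ as soon as $\delta$ is sufficiently large. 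This yields \eqref{MM07_s01_eq07}.

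\textbf{Main difficulty.} The only non-routine point is reconciling $f\ge 0$ and the unilateral bound $f(t)\le f(0)$ with the compact support of $\FT{f}$: a band-limited function cannot vanish on an open set without being identically zero, so positivity together with a strict maximum at the origin is a delicate condition. The squaring trick $f_{0}=h^{2}/h(0)^{2}$ with $\FT{h}=g\ge 0$ even handles this, turning the trivial estimate $|h(t)|\le h(0)$ into the required bound while keeping $\FT{f_{0}}$ compactly supported. Everything else reduces to Schwartz decay and the dilation $t\mapsto\delta t$, which is genuinely routine.
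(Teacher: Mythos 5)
Your proof is correct and follows essentially the same route as the paper: both construct the band-limited, non-negative function as a normalized square of the Fourier transform of a bump (equivalently, with Fourier transform a self-convolution of a non-negative compactly supported function), and then dilate so that the Schwartz decay makes the off-origin sum small on $|t|\le 1/2$. The only difference is presentational — the paper obtains $0\le\tilde f\le\tilde f(0)=1$ by invoking positive-definiteness, while you get it directly from $|h(t)|\le h(0)$ when $\FT{h}=g\ge 0$.
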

\begin{proof}
Let $\chi\in\CCINF(\R)$ be such that $\chi(\xi)\ge 0$  for any $\xi\in\R$.
Defining
\begin{align*}
    \lambda(\eta)=\int_{\R}\chi(\xi+\eta)\chi(\xi)\,d\xi,
\end{align*}
it is clear that $\lambda\in\CCINF(\R)$.
Moreover, $\lambda$ is positive-definite (cfr. Appendix \ref{MM07_apx_FuncDefPos}).
Let us now set
\begin{align}
\label{MM07_s01_eq30}
    g(\eta)=2\pi\frac{\lambda(\eta)}{C},
\end{align}
where $C=\int_{\R}\lambda(\eta)\,d\eta$. We then define
\begin{align}
\label{MM07_s01_eq31}
    \tilde{f}(t)
    =
    \frac{1}{2\pi}\int_{\R}e^{i\PRS{\eta}{t}}g(\eta)\,d\eta
    =
    \frac{1}{2\pi}\check{\FT{g}}(t)
    =
    \frac{1}{C}[\FT{\chi}(t)]^2.
\end{align}
By \eqref{MM07_s01_eq30} and \eqref{MM07_s01_eq31},
and from Theorem \ref{Sch473_th647b}, we obtain $\tilde{f}\in\SSCH(\R)$,
$1=\tilde{f}(0)\ge\tilde{f}(t)\ge 0$ for any $t\in\R$.
Let $h$ be a positive real scalar. We set
\begin{align*}
    f(t)
    =
    \tilde{f}(ht)
    =
    \frac{1}{2\pi}\int_{\R}e^{i\PRS{\eta}{ht}}
    g(\eta)\,d\eta
    =
    \frac{1}{2\pi}\check{\FT{g}}(ht),
\end{align*}
It follows, obviously, $1=f(0)\ge f(t)\ge 0$.
Since $g\in\SSCH(\R)$, $\displaystyle\check{\FT{g}}(\eta)\le 2\pi M(1+\eta^2)^{-1}$
for all $\eta\in\R$, then we find, for $|t|\le1/2$,
\begin{align*}
\begin{split}
    \sum_{k\in\Z\setminus\{0\}}f(t-k)
    &= \sum_{k\in\Z\setminus\{0\}}\frac{1}{2\pi}\check{\FT{g}}(h(t-k)) 
    \le M\sum_{k\in\Z\setminus\{0\}}\frac{1}{1+h^2(t-k)^2}
    \le 2M\sum_{k\ge 1}\frac{1}{1+h^2\left(k-\frac{1}{2}\right)^2} \\
    &< \frac{2M}{h^2}\sum_{k\ge 1}\frac{1}{\left(k-\frac{1}{2}\right)^2} 
    < \frac{2M}{h^2} \Biggl(4+\sum_{k\ge 1}\frac{1}{k^2}\Biggr) 
    = \frac{A^2}{3h^2}.
\end{split}
\end{align*}
The property \eqref{MM07_s01_eq07} is then fulfilled by choosing $h\ge A$. The proof is complete.
\end{proof}
\begin{proof}[Proof of Theorem \ref{MM07_in_th00}.]
Let $f\in\SSCH(\R)$ be chosen as in Lemma \ref{MM07_s01_lm00} and
pick $r>0$ such that
\begin{align}
    \label{MM07_s01_eq05b}
    |\tau|\ge\pi r \Rightarrow \FT{f}(\tau)=0.
\end{align}
Let us also set $L=4\pi r$ and $f_0(x)=f(x_1)\cdots f(x_n)$.
Moreover, by a duality argument, it is not restrictive to assume $1<p<2$.\\

With any given $n$-tuple of positive integers $N=(N_1,\ldots,N_n)$ associate the functions
\begin{align}
\label{MM07_s01_eq06}
\begin{split}
g_N(x) &=\SSTACK e^{i\PRS{L\gamma}{x}}f_0(x), \\
h_N(x) &=\SSTACK e^{i\PRS{L\gamma}{x}}f_0(x-\gamma).
\end{split}
\end{align}
For the functions $g_N$, $h_N$ defined in \eqref{MM07_s01_eq06} and $1<p<2$ the following estimates hold:
\begin{align}
\label{MM07_s01_eq06b}
\begin{split}
    \LNORM{g_N}{p}{\R^n} &\le C\biggl(\prod_{j=1}^n N_j\biggr)^{1-\frac{1}{p}}, \\
    \LNORM{h_N}{p}{\R^n} &\ge c\biggl(\prod_{j=1}^n N_j\biggr)^{\frac{1}{p}},
\end{split}
\end{align}
with $c, C$ positive constants independent of $N$ (see, e.g., R. Beals \cite{Bea79}, 
M. Dyachenko \cite{DYA94}, M. Dyachenko and S. Tikhonov \cite{DT07},
A. Zygmund \cite{Zyg59} and Appendix \ref{app:gnhn}). 

To prove the unboundedness of
$\displaystyle\OP:\SSYM\longrightarrow\LINEAR(\LP{p}(\R^n))$, we will build a sequence
$\{\sigma_k\}$, taking values in $\SSYM$, such that
for any $\alpha\in\Z^n_+$ there exists a constant $C_{\alpha}$, independent of $k$,
such that for any $\xi\in\R^n$ we have
\begin{align}
\label{MM07_s01_eq09}
\ABS{D^{\alpha}\sigma_k(\xi)}\le C_{\alpha}\lambda(\xi)\psi(\xi)^{-\alpha}.
\end{align}
That is, $\{\sigma_k\}$ is a bounded subset of $\SSYM$.
Moreover, the sequence $\{\sigma_k\}$ is built in such a way that
\begin{align*}
    \NORM{\sigma_k(D)}_{\LINEAR(\LP{p}(\R^n))}\longrightarrow\infty,
\end{align*}
for $k\rightarrow\infty$.

Let $\{\xi^{(k)}\}$ be a sequence in $\R^n$ satisfying $F_p(\xi^{(k)})\rightarrow\infty$.
The unboundedness hypothesis of $F_p$ and \eqref{MM07_s01_eq04} imply the
existence of a sequence of $n$-dimensional intervals
\begin{align}
\label{MM07_s01_eq10}
I(\xi^{(k)})=\SET{\eta\in\R^n}{|\eta_j|\le l_{j}(\xi^{(k)}), j=1,\ldots,n},
\end{align}
such that
\begin{align}
\label{MM07_s01_eq11}
\lambda_k
\Biggl(\prod_{j=1}^n l_{j}(\xi^{(k)})\psi_j(\xi^{(k)})^{-1}
\Biggr)^{\left(\frac{1}{p}-\frac{1}{2}\right)}
\longrightarrow\infty,
\end{align}
with
\begin{align}
\label{MM07_s01_eq12}
\lambda_k=\inf_{\eta\in S(\xi^{(k)})}\lambda(\eta),
\end{align}
and satisfying
\begin{align}
\label{MM07_s01_eq13}
\psi_j(\xi)\le\psi_j(\xi^{(k)}), \SPC j=1,\ldots,n,
\end{align}
for any $\xi\in I(\xi^{(k)})$, and
\begin{align}
\label{MM07_s01_eq14}
l_{j}(\xi^{(k)})\ge c_1\psi_j(\xi^{(k)}), \SPC j=1,\ldots,n,
\end{align}
where $c_1$ is a suitable positive constant. Let $N_{k,j}$ be the largest positive integer such that
\begin{align}
\label{MM07_s01_eq15}
N_{k,j}(2N_{k,j}+1)\le \frac{3}{c_1}l_{j}(\xi^{(k)})\psi_j(\xi^{(k)})^{-1}.
\end{align}
By \eqref{MM07_s01_eq14} it follows that
$c_1^{-1}l_{j}(\xi^{(k)})\psi_j(\xi^{(k)})^{-1}\ge 1$.
Then, \eqref{MM07_s01_eq15} implies $N_{k,j}\ge 1$.
Moreover, since
\begin{align}
\label{MM07_s01_eq15b}
N_{k,j}(2N_{k,j}+1)
\le\frac{3}{c_1}l_{j}(\xi^{(k)})\psi_j(\xi^{(k)})^{-1}
<(N_{k,j}+1)(2N_{k,j}+3),
\end{align}
dividing \eqref{MM07_s01_eq15b} by $N_{k,j}^2$ we get
\begin{align*}
\frac{3}{c_1 N_{k,j}^2}l_{j}(\xi^{(k)})\psi_j(\xi^{(k)})^{-1}
<2+\frac{5}{N_{k,j}}+\frac{3}{N_{k,j}^2},
\end{align*}
that is, there exists a costant $C$ such that
\begin{align*}
l_{j}(\xi^{(k)})\psi_j(\xi^{(k)})^{-1}\le CN_{k,j}^2.
\end{align*}
We then find, in view of  \eqref{MM07_s01_eq11},
\begin{align}
\label{MM07_s01_eq17}
\lambda_k\left(\prod_{j=1}^nN_{k,j}\right)^{\left(\frac{2}{p}-1\right)}
\longrightarrow\infty.
\end{align}

Let us now choose a cut-off function $\varphi\in\CINF_0(\R^n)$ such that
\begin{align*}
    \varphi(\eta)=
    \begin{cases}
    1 & |\eta_j|\le L/4, \quad\text{for all $j=1,\dots,n$},\\
    0 & |\eta_j|\ge L/2, \quad\text{for some $j=1,\dots,n$},
    \end{cases}
\end{align*}
where $L=4\pi r>0$ is the constant present in the expressions \eqref{MM07_s01_eq06}.
%
We start by defining
\begin{align*}
\Phi_k(\eta)=\SSSTACK e^{-i\PRS{\gamma}{\eta}}\varphi(\eta-L\gamma),
\end{align*}
and observing that
\begin{align}
\label{MM07_s01_eq19}
\SUPP \Phi_k\subset
\SET{\eta\in\R^n}{|\eta_j|<L\left(N_{k,j}+\frac{1}{2}\right),j=1,\ldots,n},
\end{align}
\begin{align}
\label{MM07_s01_eq20}
\begin{split}
|D^{\alpha}_{\eta}\Phi_k(\eta)|
&
\le\SSSTACK
\bigl|D^{\alpha}_{\eta}\bigl(e^{-i\PRS{\gamma}{\eta}}\varphi(\eta-L\gamma)\bigr)\bigr|\\
&
=\SSSTACK\sum_{\beta\le\alpha}
\bigl|\partial^{\beta}_{\eta}e^{-i\PRS{\gamma}{\eta}}\bigr|
\bigl|\partial^{\alpha-\beta}_{\eta}\varphi(\eta-L\gamma)\bigr|\\
&
\le\SSSTACK|\gamma|^{\alpha}
\sum_{\beta\le\alpha}|\partial^{\alpha-\beta}_{\eta}\varphi(\eta-L\gamma)|\\
&
\le C_{\alpha}'N_k^{\alpha},
\end{split}
\end{align}
with constants $C_{\alpha}'$ independent of $N_k$.
We then introduce the dilations
\begin{align}
\label{MM07_s01_eq21}
\eta=\tau_k(\xi),
\end{align}
where
\begin{align}
\label{MM07_s01_eq21b}
\eta_j=\tau_{k_j}\xi_j=\frac{3}{2}L(c_1\psi_j(\xi^{(k)})N_{k,j})^{-1}\xi_j,
\SPC j=1,\ldots,n,
\end{align}
and set
\begin{align}
\label{MM07_s01_eq22}
\sigma_k(\xi)=\lambda_k\Phi_k(\tau_k(\xi)).
\end{align}
In view of \eqref{MM07_s01_eq20}, \eqref{MM07_s01_eq21} and \eqref{MM07_s01_eq22}, we have
\begin{align}
\label{MM07_s01_eq23}
|D^{\alpha}_{\xi}\sigma_k(\xi)|\le
C_{\alpha}''\lambda_k\psi(\xi^{(k)})^{-\alpha},
\end{align}
and, observing that 
\begin{align*}
\SUPP\sigma_k=\SUPP(\lambda_k(\Phi_k\circ\tau_k))=\SUPP(\Phi_k\circ\tau_k),
\end{align*}
taking into account \eqref{MM07_s01_eq19} and \eqref{MM07_s01_eq21b}, we find
\begin{align*}
\frac{3}{2}L(c_1\psi_j(\xi^{(k)})N_{k,j})^{-1}\xi_j<L\left(N_{k,j}+\frac{1}{2}\right),
\end{align*}
which implies
\begin{align*}
3(c_1\psi_j(\xi^{(k)}))^{-1}\xi_j<N_{k,j}(2N_{k,j}+1).
\end{align*}
Then, by \eqref{MM07_s01_eq15}, we have $\xi_j<l_{j}(\xi^{(k)})$, hence
\begin{align*}
\SUPP\sigma_k\subset I(\xi^{(k)}).
\end{align*}
In view of \eqref{MM07_s01_eq10}, \eqref{MM07_s01_eq15}, \eqref{MM07_s01_eq19} and
\eqref{MM07_s01_eq21}, the estimates \eqref{MM07_s01_eq09} then follow by \eqref{MM07_s01_eq12}
and \eqref{MM07_s01_eq13}: we have proved that $\{\sigma_k\}\subset\SSYM$, and that is a bounded
set.\\

We will now show that
$\displaystyle\NORM{\sigma_k(D)}_{\LINEAR(\LP{p}(\R^n))}\rightarrow\infty$,
building a sequence $\{u_k\}$ in $\SSCH(\R^n)$, $u_k\neq 0$,
such that
\begin{align*}
   \frac{\LNORM{\sigma_k(D)u_k}{p}{\R^n}}{\LNORM{u_k}{p}{\R^n}}\longrightarrow\infty.
\end{align*}
Recalling the definition of $g_{N_k}$ in \eqref{MM07_s01_eq06},
we define $u_k$ as
\begin{align*}
   \FT{u}_k(\xi)=\FT{g}_{N_k}(\tau_k(\xi)).
\end{align*}
Now, taking into account \eqref{MM07_s01_eq05b}, the definition and properties of $f_0$ and
the fact that $\varphi(\eta)\equiv 1$ for any $\eta\in\SUPP f_0$,
it is immediate to check that
\begin{align*}
    \Phi_k(\eta)&\FT{g}_{N_k}(\eta) \\
    &=
    \SSSTACK e^{-i\PRS{\gamma}{\eta}}\varphi(\eta-L\gamma)
    \SSSTACK\opFT{e^{i\PRS{L\gamma}{x}}f_0(x)}(\eta) \\
    &=
    \SSSTACK e^{-i\PRS{\gamma}{\eta}}\FT{f}_0(\eta-L\gamma) \\
    &=
    \FT{h}_{N_k}(\eta),
\end{align*}
so that we find
\begin{align*}
   \sigma_k(\xi)\FT{u}_{k}(\xi)
   &=
   \lambda_k\Phi_k(\tau_k(\xi))\FT{g}_{N_k}(\tau_k(\xi)) =
   \lambda_k\FT{h}_{N_k}(\tau_k(\xi)).
\end{align*}
Taking the inverse Fourier transformations and applying the \eqref{MM07_s01_eq06b}, we have, 
for a suitable constant $c>0$,
\begin{align*}
\begin{split}
\NORM{\sigma_k(D)}_{\LINEAR(\LP{p}(\R^n))}
&=
\sup_{v\in\LP{p}(\R^n)}
\frac{\LNORM{\sigma_k(D)v}{p}{\R^n}}{\LNORM{v}{p}{\R^n}}
\ge
\frac{\LNORM{\sigma_k(D)u_k}{p}{\R^n}}{\LNORM{u_k}{p}{\R^n}} \\
&=
\lambda_k\frac{\LNORM{h_{N_k}}{p}{\R^n}}{\LNORM{g_{N_k}}{p}{\R^n}}
\ge
c\lambda_k\left(\prod_{j=1}^nN_{k,j}\right)^{\left(\frac{2}{p}-1\right)},
\end{split}
\end{align*}
so that \eqref{MM07_s01_eq17} gives the claim. The proof is complete.
\end{proof}
\begin{proof}[Proof of Theorem \ref{MM07_in_cj00}.]
	If $\OP(\SSYM)\subset\LINEAR(\LP{p}(\R^n))$,
	it is then easy to check that $\OP\colon\SSYM\to\LINEAR(\LP{p}(\R^n))$ is a linear
	closed map. Since $\SSYM$ is a Fr\'echet space and $\LINEAR(\LP{p}(\R^n))$
	is a Banach space, the Closed Graph Theorem can be applied, and implies that
	$\OP\colon\SSYM\to\LINEAR(\LP{p}(\R^n))$ is continuous, that is, bounded. 
	If $F_p(\xi)$ is unbounded, this is a contradiction, by Theorem \ref{MM07_in_th00}.
\end{proof}

\section{Corollaries and remarks}
\label{MM07_s01_mt01}
The proof of Theorem \ref{MM07_in_cj00} suggests some extensions of the result.
For instance, it is clear that, in the hypotheses, we could assume
\begin{align}
	\label{MM07_s01_eq25}
	\psi_j(\eta)\le C\psi_j(\xi), \SPC j=1,\ldots,n,
\end{align}
for any $\eta\in I(\xi)$ and a suitable constant $C$, independent of $\xi$, 
instead of $I(\xi)\subseteq S(\xi)$. Moreover, it is enough to assume that
the conditions hold only for a sequence $\{\xi^{(k)}\}$ in $\R^n$ such that
$F_p(\xi^{(k)})\rightarrow\infty$. Further, we could omit the assumption
$\mu(S(\xi^{(k)}))\le C\mu(I(\xi^{(k)}))$ and substitute $F_p(\xi^{(k)})\rightarrow\infty$
with the following condition:
\begin{align}
\label{MM07_s01_eq25b}
    \delta_k=
    \biggl(\inf_{\eta\in I(\xi^{(k)})}\lambda(\eta)\biggr)
    \biggl(\mu(I(\xi^{(k)}))\prod_{j=1}^n\psi_j(\xi^{(k)})^{-1}
    \biggr)^{\ABS{\frac{1}{p}-\frac{1}{2}}}
    \longrightarrow\infty.
\end{align}

Let us now assume
\begin{align}
\label{MM07_s01_eq26}
\begin{split}
  \psi_1(\xi)=\cdots=\psi_n(\xi)=\Psi(|\xi|)
  \qquad\text{e}\qquad
  \lambda(\xi)=\Lambda(|\xi|),
\end{split}
\end{align}
with $\Psi$, $\Lambda$ continuous and positive functions defined on $[0,\infty)$,
$\Psi$ non-decreasing and $\Lambda$ non-increasing, respectively.
\begin{corollary}
\label{MM07_s01_co01}
Let $\psi$, $\lambda$ be as in \eqref{MM07_s01_eq26}, and let
\begin{align*}
    G_p(t)=\Lambda(t)\left(t\Psi(t)^{-1}\right)^{n\ABS{\frac{1}{p}-\frac{1}{2}}},
    \SPC t>0,
\end{align*}
be unbounded. Then, there exists $\sigma(D)$ in $\OP(\SSYM)$ not $\LP{p}(\R^n)$-bounded.
\end{corollary}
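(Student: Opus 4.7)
The plan is to invoke the extended form of Theorem \ref{MM07_in_cj00} sketched at the beginning of this section: it suffices to exhibit a sequence $\{\xi^{(k)}\}\subset\R^n$ together with intervals $I(\xi^{(k)})$ of the form \eqref{MM07_s01_eq04} satisfying the relaxed inclusion \eqref{MM07_s01_eq25}, the lower bound $l_j(\xi^{(k)})\ge c\psi_j(\xi^{(k)})$, and the divergence $\delta_k\to\infty$ in \eqref{MM07_s01_eq25b}. Since $\lambda$, and hence $\Lambda$, is bounded by Definition \ref{MM07_s01_def00}, and since we may assume $p\neq 2$ (otherwise $G_p=\Lambda$ would be bounded and the hypothesis would fail), the unboundedness of $G_p$ forces $t/\Psi(t)$ to be unbounded. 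I would pick $t_k\to\infty$ with $G_p(t_k)\to\infty$ and take any $\xi^{(k)}\in\R^n$ with $|\xi^{(k)}|=t_k$.

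For each $k$ I would set $l_j(\xi^{(k)})=t_k/\sqrt{n}$ for every $j$, making $I(\xi^{(k)})$ the cube inscribed in the Euclidean ball $B(0,t_k)$. Any $\eta\in I(\xi^{(k)})$ then satisfies $|\eta|\le t_k$, so the monotonicity of $\Psi$ gives $\psi_j(\eta)=\Psi(|\eta|)\le\Psi(t_k)=\psi_j(\xi^{(k)})$; this is stronger than \eqref{MM07_s01_eq25}, and in fact yields $I(\xi^{(k)})\subseteq S(\xi^{(k)})$. For the lower bound, the boundedness of $\Lambda$ together with $G_p(t_k)\to\infty$ forces $t_k/\Psi(t_k)\to\infty$, so for all sufficiently large $k$ we have $l_j(\xi^{(k)})=t_k/\sqrt{n}\ge\Psi(t_k)=\psi_j(\xi^{(k)})$.

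It remains to verify $\delta_k\to\infty$. The cube $I(\xi^{(k)})$ attains its maximal Euclidean norm exactly at its $2^n$ vertices, where $|\eta|=t_k$, so the monotonicity of $\Lambda$ yields $\inf_{\eta\in I(\xi^{(k)})}\lambda(\eta)=\Lambda(t_k)$. Since $\mu(I(\xi^{(k)}))=(2t_k/\sqrt{n})^n$ and $\prod_{j=1}^n\psi_j(\xi^{(k)})^{-1}=\Psi(t_k)^{-n}$, one obtains
\begin{equation*}
\delta_k=\Lambda(t_k)\,(2/\sqrt{n})^{n\ABS{\frac{1}{p}-\frac{1}{2}}}\,(t_k/\Psi(t_k))^{n\ABS{\frac{1}{p}-\frac{1}{2}}}=(2/\sqrt{n})^{n\ABS{\frac{1}{p}-\frac{1}{2}}}\,G_p(t_k),
\end{equation*}
which diverges by the choice of $t_k$. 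The extended version of Theorem \ref{MM07_in_cj00} then produces a symbol $\sigma\in\SSYM$ whose operator $\sigma(D)$ is not bounded on $L^p(\R^n)$.

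The argument is essentially a bookkeeping exercise once the appropriate weakening of Theorem \ref{MM07_in_cj00} is at hand; no genuine obstacle arises. The only point worth emphasizing is the interplay between the two monotonicity assumptions: the non-decrease of $\Psi$ places the cube inscribed in $B(0,t_k)$ inside $S(\xi^{(k)})$, while the non-increase of $\Lambda$ locates its infimum over this cube exactly on the sphere of radius $t_k$, so that the factor $\Lambda(t_k)$ survives and the radial function $G_p(t_k)$ is recovered up to a dimensional constant.
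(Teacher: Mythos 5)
Your proposal is correct and follows essentially the same route as the paper's own proof: both select $t_k$ with $G_p(t_k)\to\infty$, inscribe a cube of sidelength proportional to $t_k$ in the ball $\{|\xi|\le t_k\}$, use the monotonicity of $\Psi$ and $\Lambda$ to verify the relaxed hypotheses \eqref{MM07_s01_eq25} and the lower bound $l_j\ge c\Psi(t_k)$, and conclude $\delta_k\ge cG_p(t_k)\to\infty$ via \eqref{MM07_s01_eq25b}. Your version merely makes the constants and the location of the infimum of $\Lambda$ explicit.
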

\begin{remark}
For instance, when $\Psi(t)=(1+t)^{\rho}$, $\Lambda(t)=(1+t)^{-m}$, $\rho$ and $m$
non-negative real numbers, we have
\begin{align*}
    G_p(t)\sim t^{-m+(1-\rho)n\ABS{\frac{1}{p}-\frac{1}{2}}},\quad t\to\infty,
\end{align*}
which is unbounded if $\rho<1$ and $\displaystyle m<(1-\rho)n|1/p-1/2|$. We then reobtain
a result proved by C. Fefferman in \cite{Fef73}.
For $\Lambda\equiv 1$ we reobtain the result proved by R. Beals in \cite{Bea79}.
\end{remark}
\begin{proof}[Proof of Corollary \ref{MM07_s01_co01}]
Let $\{t_k\}$ be a sequence such that $\displaystyle G_p(t_k)\rightarrow\infty$.
Taking into account \eqref{MM07_s01_eq26}, it is possible to build a sequence of 
$n$-dimensional cubes $I(t_k)$ whose sidelength $l_k$ is proportional to $t_k$ and such that
\begin{align*}
    I(t_k)\subseteq \SET{\xi\in\R^n}{|\xi|\le t_k}\subseteq S(\eta),
\end{align*}
for $|\eta|=t_k$.
Since $\displaystyle G_p(t_k)\rightarrow\infty$
implies $t_k\Psi(t_k)^{-1}\rightarrow\infty$,
the condition $l_k\ge c\Psi(t_k)$ is certainly fulfilled. So, in agreement with the 
observations at the beginning of the section, we only need to check \eqref{MM07_s01_eq25b}.
In the present case
\begin{align*}
    \inf_{\xi\in I(t_k)}\Lambda(|\xi|)\ge\Lambda(t_k),
\end{align*}
since $\Lambda$ is non-increasing, and then
\begin{align*}
    \delta_k\ge cG_p(t_k).
\end{align*}
This shows that $\delta_k\rightarrow\infty$ and concludes the proof.
\end{proof}
\noindent Let us now assume the functions $\psi$ e $\lambda$ to be \EMPH{slowly varying},
that is
\begin{align}
\label{MM07_s01_eq27}
\begin{split}
    c &\le\psi_j(\xi+\eta)\psi_j(\xi)^{-1}\le C, \SPC j=1,\ldots,n, \\
    c &\le\lambda(\xi+\eta)\lambda(\xi)^{-1}\le C,
\end{split}
\end{align}
for $|\eta_k|\le c\psi_h(\xi)$, $h=1,\ldots,n$, and fixed constants $c,C>0$.
Moreover, let $\psi$ be decreasing and $\lambda$ be increasing, respectively,
when ``coordinates grow'', that is
\begin{align}
\label{MM07_s01_eq28}
\begin{split}
    \psi_j(\eta)  &\le\psi_j(\xi), \SPC j=1,\ldots,n, \\
    \lambda(\eta) &\ge\lambda(\xi),
\end{split}
\end{align}
for $|\eta_h|<|\xi_h|$, $h=1,\ldots,n$.
\begin{corollary}
\label{MM07_s01_co02}
Assume that $\psi$ and $\lambda$ satisfy conditions \eqref{MM07_s01_eq27} and
\eqref{MM07_s01_eq28}, and that the function
\begin{align*}
F^*_p(\xi)=
\lambda(\xi)\left(\prod_{j=1}^{n} |\xi_j|\psi_j(\xi)^{-1}
\right)^{\ABS{\frac{1}{p}-\frac{1}{2}}},
\end{align*}
is unbounded. Then, there exists an operator $\sigma(D)$ in $\OP(\SSYM)$ which is not $\LP{p}(\R^n)$-bounded.
\end{corollary}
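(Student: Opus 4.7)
The plan is to reduce Corollary \ref{MM07_s01_co02} to the relaxed form of Theorem \ref{MM07_in_th00}, combined with the Closed Graph argument of Theorem \ref{MM07_in_cj00}, both described at the beginning of this section. Given a sequence $\{\xi^{(k)}\}$ along which $F^*_p$ diverges, I would construct $n$-dimensional intervals $I(\xi^{(k)})$ centered at the origin with sides $l_{k,j} \ge c\psi_j(\xi^{(k)})$, satisfying $\psi_j(\eta) \le C\psi_j(\xi^{(k)})$ throughout $I(\xi^{(k)})$, and such that the quantity $\delta_k$ in \eqref{MM07_s01_eq25b} tends to infinity.

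The first step is to upgrade the sequence. Since $\lambda$ is bounded, the divergence of $F^*_p(\xi^{(k)})$ forces $\prod_j |\xi^{(k)}_j|\psi_j(\xi^{(k)})^{-1} \to \infty$, but the individual ratios could remain small. I would therefore replace $\xi^{(k)}$ by $\zeta^{(k)}$ defined coordinatewise as $\zeta^{(k)}_j = \mathrm{sign}(\xi^{(k)}_j)\max(|\xi^{(k)}_j|,\psi_j(\xi^{(k)}))$. The enlargement along each coordinate is at most $\psi_j(\xi^{(k)})$, and finitely many iterations of the slow-variation bound \eqref{MM07_s01_eq27}, each covering a displacement of size $c\psi_j$, yield $\psi_j(\zeta^{(k)}) \asymp \psi_j(\xi^{(k)})$ and $\lambda(\zeta^{(k)}) \asymp \lambda(\xi^{(k)})$. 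In particular $|\zeta^{(k)}_j| \ge c_0 \psi_j(\zeta^{(k)})$ with $c_0$ independent of $k$, and $F^*_p(\zeta^{(k)}) \ge c\,F^*_p(\xi^{(k)}) \to \infty$.

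With $\zeta^{(k)}$ in hand, I would choose $l_{k,j} = c'|\zeta^{(k)}_j|$ for a small $c' \in (0,1)$; the lower bound $l_{k,j} \ge c\,c'\psi_j(\zeta^{(k)})$ is then automatic. For every $\eta \in I(\zeta^{(k)})$ one has $|\eta_j| \le |\zeta^{(k)}_j|$ coordinatewise, so the coordinate monotonicity \eqref{MM07_s01_eq28} gives at once $\psi_j(\eta) \le \psi_j(\zeta^{(k)})$ and $\lambda(\eta) \ge \lambda(\zeta^{(k)})$. Estimating
\begin{align*}
\delta_k \ge \lambda(\zeta^{(k)})\Biggl((2c')^n\prod_{j=1}^n \frac{|\zeta^{(k)}_j|}{\psi_j(\zeta^{(k)})}\Biggr)^{\ABS{\frac{1}{p}-\frac{1}{2}}} \ge \tilde c\, F^*_p(\zeta^{(k)}),
\end{align*}
with $\tilde c > 0$ independent of $k$, one obtains $\delta_k \to \infty$. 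The relaxed version of Theorem \ref{MM07_in_th00} then produces a bounded sequence in $\SSYM$ whose operator norms on $\LP{p}(\R^n)$ diverge, and the Closed Graph argument from the proof of Theorem \ref{MM07_in_cj00} supplies the desired $\sigma(D) \in \OP(\SSYM)$ that is not $\LP{p}(\R^n)$-bounded.

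The main technical obstacle is the upgrade from $\xi^{(k)}$ to $\zeta^{(k)}$: the slow-variation bound \eqref{MM07_s01_eq27} only permits perturbations of order $c\psi_j$, while the required enlargement may reach $\psi_j$ itself. A finite iteration whose length depends only on the constant $c$ resolves the issue and keeps all comparison constants uniform in $k$.
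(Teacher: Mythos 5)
Your argument is correct and is essentially the paper's own proof: both reduce the statement to the relaxed hypotheses \eqref{MM07_s01_eq25} and \eqref{MM07_s01_eq25b} by producing intervals whose half-sides in the $j$-th direction are comparable to $\max(|\xi^{(k)}_j|,\psi_j(\xi^{(k)}))$. The only difference is mechanical: the paper keeps the points $\xi^{(k)}$ and pads the interval, taking half-sides $|\xi^{(k)}_j|+\frac{c}{2}\psi_j(\xi^{(k)})$ and invoking \eqref{MM07_s01_eq27} to absorb the overshoot beyond $|\xi^{(k)}_j|$, whereas you recenter the sequence outward to $\zeta^{(k)}$ so that the interval sits strictly inside the region where the coordinate monotonicity \eqref{MM07_s01_eq28} applies verbatim. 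One point in your write-up should be made explicit: the chain of slow-variation steps from $\xi^{(k)}$ to $\zeta^{(k)}$ reaches its target in $O(1/c)$ steps only because the displacement is \emph{outward}, so that \eqref{MM07_s01_eq28} guarantees $\psi_j$ does not decrease along the chain and every step may again be taken of length $c\psi_j(\xi^{(k)})$. Without that monotonicity the admissible step length could shrink by the factor $c$ at each stage, and the chain would only be guaranteed to cover a distance $\frac{c}{1-c}\psi_j(\xi^{(k)})<\psi_j(\xi^{(k)})$ when $c<1/2$, so ``finitely many iterations'' would not suffice on the strength of \eqref{MM07_s01_eq27} alone. Alternatively, you can avoid the iteration altogether by setting $|\zeta^{(k)}_j|=\max\bigl(|\xi^{(k)}_j|,\tfrac{c}{2}\psi_j(\xi^{(k)})\bigr)$, for which a single application of \eqref{MM07_s01_eq27} already yields all the comparisons with constants uniform in $k$; this is in effect what the paper's choice of interval accomplishes.
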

\begin{remark}
Fix a $n$-tuple of positive integers  $L=(L_1,\ldots,L_n)$, a
$n$-tuple of real scalars  $\rho=(\rho_1,\ldots,\rho_n)$, with $0\le\rho_j\le 1$, $j=1,\ldots,n$, and set
\begin{align*}
    [\xi]_L=1+\sum_{j=1}^n|\xi_j|^{1/L_j}.
\end{align*}
Consider $\lambda(\xi)=[\xi]_L^{-m}$, $m\ge 0$, and
$\psi(\xi)=([\xi]_L^{\rho_1 L_1},\ldots,[\xi]_L^{\rho_n L_n})$,
denoting by $\mathcal{M}_{L,\rho}^{-m}$ the corresponding symbol class, considered by A. Nagel and E. Stein \cite{NS78}.
It is possible to prove that conditions \eqref{MM07_s01_eq27} and \eqref{MM07_s01_eq28} are fulfilled,
and, evaluating $F_p^*(\xi)$ in the points with coordinates $\xi_j=t^{L_j}$, $j=1,\ldots,n$,
$t>0$, we find that, if
\begin{align*}
    m<\ABS{\frac{1}{p}-\frac{1}{2}}\sum_{j=1}^n(1-\rho_j)L_j,
\end{align*}
there exists $\sigma(D)$ not $\LP{p}(\R^n)$-bounded with symbol in $\mathcal{M}_{L,\rho}^{-m}$.
Choosing $\lambda\equiv 1$ in Corollary \ref{MM07_s01_co02}, we reobtain a result proved by
R. Beals in \cite{Bea79}.
\end{remark}
\begin{proof}[Proof of Corollary \ref{MM07_s01_co02}]
Define
\begin{align*}
    I(\xi)=\SET{\eta\in\R^n}{|\eta_j|\le|\xi_j|+\frac{c}{2}\psi_j(\xi), j=1\ldots,n}.
\end{align*}
Applying first \eqref{MM07_s01_eq27} and then \eqref{MM07_s01_eq28}, we see that
\eqref{MM07_s01_eq25} is fulfilled. Taking into account that
\begin{align*}
    \inf_{\eta\in I(\xi)}\lambda(\eta)\ge C\lambda(\xi),
\end{align*}
again in view of \eqref{MM07_s01_eq27} and \eqref{MM07_s01_eq28}, and of
\begin{align*}
    \mu\left(I(\xi)\right)>\prod_{j=1}^n|\xi_j|,
\end{align*}
the statement follows by the observations at the beginning of the section.
\end{proof}
\appendix

\section{$L^p(\R)$ norms of tempered trigonometric polynomials}
\label{app:gnhn}

For the convenience of the reader, we give here the proof of the estimates \eqref{MM07_s01_eq06}.
They are consequence of the properties of the function $f$ stated in Lemma \ref{MM07_s01_lm00},
of the following Lemma \ref{lemma:f2}, see \cite{Bea75}, and of the properties of the Dirichlet kernels
recalled in Lemma \ref{lemma:DM}, see, e.g., \cite{DYA94, DT07, Zyg59}. 

\begin{lemma}
	\label{lemma:f2}
	Let $M\in\Z_+$ and define $\displaystyle z_M(t)=\sum_{|j|\le M}e^{i L j t} f(t-j)$, $r>0$,
	with the function $f\in\SSCH(\R)$ chosen as in Lemma \ref{MM07_s01_lm00} and $L=4\pi r >0$.
	Then, $\displaystyle |z_M(t)|\ge\frac{1}{2}$ on the intervals
	$[k-\delta,k+\delta]$, $\displaystyle\delta=\delta(f)\in\left(0,\frac{1}{2}\right)$, 
	for any $k\in\Z$ such that $|k|\le M$.
\end{lemma}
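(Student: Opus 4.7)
The plan is to show the estimate near each integer point $k$ by isolating in $z_M$ the summand with index $j=k$, which has modulus $f(s)$ close to $1$, and controlling the remaining terms by means of the decay property \eqref{MM07_s01_eq07} of Lemma \ref{MM07_s01_lm00}.

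Fix $k\in\Z$ with $|k|\le M$, and parametrize points of the putative interval by $t = k+s$, $|s|\le 1/2$. Splitting off the $j=k$ term in the sum defining $z_M$ yields
\[
	z_M(k+s) = e^{iLk(k+s)}\,f(s) \;+\; \sum_{\substack{|j|\le M\\ j\ne k}} e^{iLj(k+s)}\,f\bigl(s-(j-k)\bigr).
\]
Since $f\ge 0$, the triangle inequality together with the substitution $\ell = j-k$ gives
\[
	|z_M(k+s)| \;\ge\; f(s)\;-\;\sum_{\ell \in \Z\setminus\{0\}} f(s-\ell),
\]
where the index set of the tail has merely been enlarged from a finite subset of $\Z\setminus\{0\}$ to all of $\Z\setminus\{0\}$.

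By Lemma \ref{MM07_s01_lm00}, the tail sum is bounded by $1/3$ whenever $|s|\le 1/2$, so $|z_M(k+s)| \ge f(s) - 1/3$. Continuity of $f$ at the origin with $f(0) = 1$ then furnishes $\delta = \delta(f) \in (0, 1/2)$, depending only on $f$, such that $f(s) \ge 5/6$ whenever $|s|\le \delta$. Therefore $|z_M(t)| \ge 1/2$ on $[k-\delta, k+\delta]$, as claimed.

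I do not anticipate any real obstacle: the entire estimate is ``baked into'' the careful choice of $f$ provided by Lemma \ref{MM07_s01_lm00}. The one point requiring attention is the uniformity of $\delta$ in both $k$ and $M$, which is immediate because the tail bound $1/3$ from Lemma \ref{MM07_s01_lm00} and the modulus of continuity of $f$ at $0$ are features of $f$ alone, while the enlargement of the index set in the tail estimate is independent of $k$ and $M$.
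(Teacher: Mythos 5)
Your proof is correct and follows essentially the same route as the paper's: isolate the $j=k$ summand, enlarge the re-indexed tail to all of $\Z\setminus\{0\}$ using $f\ge 0$, invoke the bound \eqref{MM07_s01_eq07}, and choose $\delta$ by continuity of $f$ at $0$ (the paper uses $f(s)\ge 8/9$ where you use $5/6$, an immaterial difference). Nothing to add.
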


\begin{lemma}
	\label{lemma:DM}
	Let $M\in\Z_+$ and consider the $M$-th Dirichlet kernel $\displaystyle D_M(t)=\sum_{|j|\le M}e^{i j t}$.
	Then, for each $p\in(1,+\infty)$ there exists a suitable positive constant $K$, depending only on $p$,
	such that
	\[
		\|D_M\|_{L^p(0,2\pi)}\le K M^{1-\frac{1}{p}}.
	\]
\end{lemma}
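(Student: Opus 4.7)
The plan is to start from the classical closed-form expression for the Dirichlet kernel,
$$D_M(t) = \sum_{|j|\le M} e^{ijt} = \frac{\sin\bigl((M+\tfrac{1}{2})t\bigr)}{\sin(t/2)},$$
and to estimate its $L^p$ norm via a two-region splitting at the natural scale $|t|\sim 1/M$. By the $2\pi$-periodicity of $D_M$ and symmetry about $\pi$, estimating the $L^p$ norm on $(0,2\pi)$ is equivalent to estimating it on $(-\pi,\pi)$, which is the more convenient interval around the singularity at the origin.

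First I would record two elementary pointwise bounds on $D_M$: the trivial bound $|D_M(t)|\le 2M+1$, valid everywhere (immediate from the defining sum); and the bound $|D_M(t)|\le 1/|\sin(t/2)|\le \pi/|t|$ for $0<|t|\le \pi$, which follows from the closed form together with the elementary inequality $\sin s\ge 2s/\pi$ on $[0,\pi/2]$.

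Next I would split the integral $\int_{-\pi}^{\pi}|D_M(t)|^p\,dt$ at $|t|=1/M$. On the inner region $|t|<1/M$ the trivial bound contributes at most $(2M+1)^p\cdot(2/M)\le C_1 M^{p-1}$. On the outer region $1/M\le|t|\le\pi$ the second bound contributes at most $2\pi^p\int_{1/M}^{\pi}t^{-p}\,dt$, which for $p>1$ is dominated by $C_2(p-1)^{-1}M^{p-1}$. Summing the two contributions and taking $p$-th roots yields the claim $\|D_M\|_{L^p(0,2\pi)}\le K\,M^{1-1/p}$, with $K$ depending only on $p$.

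The argument is essentially routine; the only subtle point, and the reason why the hypothesis $p>1$ is indispensable, is that it is precisely this condition which forces $\int_{1/M}^{\pi}t^{-p}\,dt$ to grow like $M^{p-1}$ (at $p=1$ it would produce instead the familiar logarithmic Lebesgue constant). As a cross-check one can observe that in the range $p\ge 2$ the Hausdorff--Young inequality gives directly $\|D_M\|_{L^p}\le \|\mathbf{1}_{[-M,M]}\|_{\ell^{p'}}=(2M+1)^{1-1/p}$, in perfect agreement with the stated estimate, while the regime $1<p<2$ is covered only by the direct splitting above (or, alternatively, by log-convex interpolation between the Parseval identity at $p=2$ and a separately proved bound, which however is less uniform and not particularly shorter).
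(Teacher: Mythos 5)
Your proof is correct and follows essentially the same route as the paper's: both start from the closed form $D_M(t)=\sin\bigl((M+\tfrac12)t\bigr)/\sin(t/2)$ and extract the $M^{p-1}$ growth from the elementary bound $|\sin(t/2)|\ge |t|/\pi$ on $[-\pi,\pi]$, with $p>1$ guaranteeing integrability of $t^{-p}$ away from the origin. The only difference is presentational: the paper rescales $t=s/M$ and invokes dominated convergence to show that $\|D_M\|_{L^p(0,2\pi)}^p/M^{p-1}$ has a finite limit, whereas you split the integral at $|t|=1/M$ and bound the two pieces explicitly, which has the minor advantage of producing an explicit constant.
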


\begin{corollary}
	\label{cor:zm}
	Let $z_M$, $M\in\Z_+$, be defined as in Lemma \ref{lemma:f2}. Then, for any $p\in[1,\infty)$ and
	a suitable positive constant $b$, depending only on $p$ and $f$, 
	\[
		\| z_M \|_{L^p(\R)} \ge bM^\frac{1}{p}.
	\]
\end{corollary}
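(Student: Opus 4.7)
The plan is to convert the pointwise lower bound supplied by Lemma \ref{lemma:f2} into an integral lower bound, exploiting the fact that the intervals on which that bound holds are pairwise disjoint and roughly $2M$ in number. This is the standard way to read off an $L^p$ estimate once one has a uniform pointwise estimate on a sufficiently large disjoint family.

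First, I would invoke Lemma \ref{lemma:f2} to fix $\delta=\delta(f)\in(0,1/2)$ such that $|z_M(t)|\ge 1/2$ whenever $t\in I_k:=[k-\delta,k+\delta]$, for each integer $k$ with $|k|\le M$. Since $\delta<1/2$, the consecutive intervals $I_k$ and $I_{k+1}$ are separated, so the family $\{I_k\}_{|k|\le M}$ consists of $2M+1$ pairwise disjoint intervals of length $2\delta$. Restricting the $L^p$ integration to the disjoint union $\bigcup_{|k|\le M} I_k$, I would then estimate
\begin{align*}
\|z_M\|_{L^p(\R)}^p
\ge \sum_{|k|\le M}\int_{I_k}|z_M(t)|^p\,dt
\ge (2M+1)\cdot 2\delta\cdot 2^{-p}.
\end{align*}
Taking $p$-th roots and using $(2M+1)^{1/p}\ge M^{1/p}$ gives
\[
\|z_M\|_{L^p(\R)}\ge b\,M^{1/p},\qquad b:=\tfrac{1}{2}(2\delta)^{1/p},
\]
with $b$ depending only on $p$ and $f$ (through $\delta$), as required.

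There is essentially no obstacle here, since the substantive work has already been carried out in Lemma \ref{lemma:f2}, which supplies both the pointwise bound and its uniformity in $k$. The only verifications are that $\delta<1/2$ guarantees disjointness of the $I_k$'s and that the resulting constant $b$ is independent of $M$; both are immediate. The argument is also visibly sharp in the power of $M$: one gets $M^{1/p}$ because there are $O(M)$ unit intervals contributing a fixed amount to $\int|z_M|^p$, matching the order of magnitude expected from an $L^p$ norm of a trigonometric polynomial of degree $M$ on a bounded set.
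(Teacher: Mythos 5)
Your argument is correct and is essentially identical to the paper's: both restrict the $L^p$ integral to the $2M+1$ disjoint intervals $[k-\delta,k+\delta]$, $|k|\le M$, apply the uniform bound $|z_M|\ge 1/2$ from Lemma \ref{lemma:f2}, and take $p$-th roots. The only cosmetic difference is that the paper first partitions $\R$ into unit intervals before shrinking to the $\delta$-neighbourhoods, which yields the same estimate with a marginally different constant.
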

\begin{proof}
	Indeed, Lemma \ref{lemma:f2} implies
	\begin{align*}
		\|z_M\|_{L^p(\R)}^p&=\int_{-\infty}^{+\infty}|z_M(t)|^p\,dt 
		=\sum_{k\in\Z}\int_{k-\frac{1}{2}}^{k+\frac{1}{2}}|z_M(t)|^p\,dt
		\ge\sum_{|k|\le M}\int_{k-\frac{1}{2}}^{k+\frac{1}{2}}|z_M(t)|^p\,dt
		\\
		&\ge
		\sum_{|k|\le M}\int_{k-\delta}^{k+\delta}|z_M(t)|^p\,dt
		\ge\frac{1}{2^p}\sum_{|k|\le M}2\delta
		\\
		&\Rightarrow 
		\|z_M\|_{L^p(\R)}\ge \left(\delta^\frac{1}{p} 2^{\frac{2}{p}-1}\right) M^\frac{1}{p},
	\end{align*}
	as claimed.
\end{proof}

\begin{corollary}
	\label{cor:hn}
	The function $h_N(x)$, $N=(N_1,\ldots,N_n)$, 
	defined in \eqref{MM07_s01_eq06} satisfies the estimate
	\[
		\LNORM{h_N}{p}{\R^n} \ge c\biggl(\prod_{j=1}^n N_j\biggr)^\frac{1}{p}, \quad p\in[1,+\infty),
	\]
	with a positive constant $c$ depending only on $n$, $p$ and $f$.
\end{corollary}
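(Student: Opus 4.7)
The plan is to exploit the tensor-product structure of $h_N$. Writing out the definition \eqref{MM07_s01_eq06} and using $f_0(x)=f(x_1)\cdots f(x_n)$ together with the factorization $e^{i\PRS{L\gamma}{x}}=\prod_{j=1}^n e^{iL\gamma_j x_j}$, the sum over the box $\{|\gamma_j|\le N_j\}$ splits coordinate by coordinate, so that
\begin{equation*}
    h_N(x)=\prod_{j=1}^n\Biggl(\sum_{|\gamma_j|\le N_j}e^{iL\gamma_j x_j}f(x_j-\gamma_j)\Biggr)
    =\prod_{j=1}^n z_{N_j}(x_j),
\end{equation*}
where $z_M$ is the one-dimensional function defined in Lemma \ref{lemma:f2}.

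From here the proof is a direct application of Fubini. Since $|h_N(x)|^p=\prod_{j=1}^n|z_{N_j}(x_j)|^p$, integration in $x\in\R^n$ factorizes:
\begin{equation*}
    \LNORM{h_N}{p}{\R^n}^p
    =\int_{\R^n}\prod_{j=1}^n\bigl|z_{N_j}(x_j)\bigr|^p\,dx
    =\prod_{j=1}^n\NORM{z_{N_j}}_{L^p(\R)}^p.
\end{equation*}
Now I invoke Corollary \ref{cor:zm}, which gives $\NORM{z_{N_j}}_{L^p(\R)}\ge b\,N_j^{1/p}$ for each $j$, with $b>0$ depending only on $p$ and $f$. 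Multiplying these bounds yields
\begin{equation*}
    \LNORM{h_N}{p}{\R^n}^p\ge b^{pn}\prod_{j=1}^n N_j,
\end{equation*}
hence $\LNORM{h_N}{p}{\R^n}\ge b^n\bigl(\prod_{j=1}^n N_j\bigr)^{1/p}$, which is the claim with $c=b^n$.

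There is essentially no obstacle here: the only non-trivial ingredient, the lower bound $|z_M(t)|\ge 1/2$ on intervals of length $2\delta$ centered at integers $|k|\le M$, is already packaged in Lemma \ref{lemma:f2} and Corollary \ref{cor:zm}. The one point to be a little careful about is to make sure the constant $c$ depends only on $n$, $p$, and $f$; this is automatic from the tensor product argument, because $b$ in Corollary \ref{cor:zm} depends only on $p$ and on the one-dimensional function $f$ (through the width $\delta(f)$ of the intervals on which $|z_M|\ge 1/2$), and $n$ enters only through the number of factors.
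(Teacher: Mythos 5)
Your proof is correct and follows essentially the same route as the paper: both factor $h_N(x)=\prod_{j=1}^n z_{N_j}(x_j)$ and reduce to the one-dimensional lower bound of Corollary \ref{cor:zm}, with the only difference being that you write out the Fubini step and the constant $c=b^n$ explicitly where the paper leaves them implicit.
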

\begin{proof}
	The statement follows immediately from Corollary \ref{cor:zm}, observing that, obviously, for
	any $x=(x_1,\dots, x_n)\in\R^n$,
	\[
		h_N(x)=\prod_{j=1}^n z_{N_j}(x_j).
	\]
\end{proof}

\begin{corollary}
	\label{cor:f3}
	For $p\in(1,+\infty)$, $L=4\pi r>0$ and a function $f\in\SSCH(\R)$ as in Lemma \ref{MM07_s01_lm00}, 
	we have
	\[
		\int_{-\infty}^{+\infty}[f(t)]^p\left|\sum_{|j|\le M} e^{iL j t}\right|^p\,dt
		\le B M^{p-1},
	\]
	with a suitable positive constant $B$ depending only on $L$, $p$ and $f$.
\end{corollary}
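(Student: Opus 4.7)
The plan is to reduce the integral to a sum of $L^p$ norms of the Dirichlet kernel over its periodicity interval, controlled by the rapid decay of the Schwartz function $f$.

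First, I would recognize the trigonometric polynomial as a rescaled Dirichlet kernel: $\sum_{|j|\le M}e^{iLjt} = D_M(Lt)$, with $D_M$ exactly the kernel of Lemma \ref{lemma:DM}. After the change of variable $s=Lt$, $ds=L\,dt$, the integral becomes
\[
\int_{-\infty}^{+\infty}[f(t)]^p|D_M(Lt)|^p\,dt
= \frac{1}{L}\int_{-\infty}^{+\infty}[f(s/L)]^p|D_M(s)|^p\,ds.
\]

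Next, I would partition $\R$ into the periodicity intervals $J_k=[2\pi k,2\pi(k+1))$, $k\in\Z$, of $D_M$. By $2\pi$-periodicity of $D_M$ together with Lemma \ref{lemma:DM},
\[
\int_{J_k}|D_M(s)|^p\,ds = \|D_M\|_{L^p(0,2\pi)}^p \le K^p M^{p-1}
\]
for every $k$. On each $J_k$, I would bound $[f(s/L)]^p$ by $m_k:=\sup_{s\in J_k}[f(s/L)]^p$, which yields
\[
\int_{-\infty}^{+\infty}[f(t)]^p|D_M(Lt)|^p\,dt \le \frac{K^p M^{p-1}}{L}\sum_{k\in\Z}m_k.
\]

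Finally, since $f\in\SSCH(\R)$, there is for every $N$ a constant $C_N$ with $|f(t)|\le C_N(1+|t|)^{-N}$, so $m_k\le C_N^p(1+2\pi|k|/L-2\pi/L)^{-Np}$ for $|k|\ge 1$; choosing any $N>1/p$ makes $\sum_{k}m_k$ finite, and the desired estimate holds with $B=L^{-1}K^p\sum_{k}m_k$, a constant depending only on $L$, $p$ and $f$. I do not anticipate any genuine obstacle: the one-dimensional Dirichlet-kernel estimate combined with periodicity and Schwartz decay is a routine argument, and the only care needed is to verify that $B$ is independent of $M$, which is immediate from the decomposition.
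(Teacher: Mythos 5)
Your argument is correct and is essentially the paper's own proof: both rescale by $L$, split $\R$ into the $2\pi$-periodicity intervals of the Dirichlet kernel, pull out $\|D_M\|_{L^p(0,2\pi)}^p\le K^pM^{p-1}$ from Lemma \ref{lemma:DM} on each piece, and sum the suprema of $[f(\cdot/L)]^p$ over the intervals using the decay of $f$ (the paper uses the explicit bound $f(s)\le A(1+s^2)^{-1}$ from Lemma \ref{MM07_s01_lm00}, you use generic Schwartz decay; either suffices). No gaps.
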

\begin{proof}
	Rescaling the integration variable by the factor $L$, recalling that $f(s)=|f(s)|\le A(1+|s|^2)^{-1}$ for
	a suitable constant $A>0$, and denoting by $D_M$ the
	$M$-th Dirichlet kernel, we easily obtain
	\begin{align*}
		\int_{-\infty}^{+\infty}[f(t)]^p&\left|\sum_{|j|\le M} e^{iL j t}\right|^p\,dt
		=\frac{1}{L}\sum_{k\in\Z}\int_{2k\pi}^{2(k+1)\pi}
		\left[f\left(\frac{t}{L}\right)\right]^p\left|\sum_{|j|\le M} e^{i j t}\right|^p\,dt
		\\
		&=\frac{1}{L}\sum_{k\in\Z}\int_{0}^{2\pi}
		\left[f\left(\frac{t+2k\pi}{L}\right)\right]^p\left|D_M(t)\right|^p\,dt
		\\
		&\le
		\frac{2}{L}
		\|D_M\|_{L^p(0,2\pi)}^p
		\sum_{k\in\Z_+}
		\left[
		\frac{A}{1+\left(\dfrac{2k\pi}{L}\right)^2}
		\right]^p,
	\end{align*}
	and the result follows by Lemma \ref{lemma:DM} above.
\end{proof}

\begin{corollary}
	\label{cor:gn}
	The function $g_N(x)$, $N=(N_1,\ldots,N_n)$, 
	defined in \eqref{MM07_s01_eq06} satisfies the estimate
	\[
		\LNORM{g_N}{p}{\R^n} \le C\biggl(\prod_{j=1}^n N_j\biggr)^{1-\frac{1}{p}}, \quad p\in(1,+\infty),
	\]
	with a positive constant $C$ depending only on $n$, $L$, $p$ and $f$.
\end{corollary}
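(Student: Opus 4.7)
The plan is to reduce the multidimensional estimate to a product of one-dimensional estimates, and then invoke Corollary \ref{cor:f3} in each factor. The key structural observation is that the exponential sum in the definition of $g_N$ factors: since $\langle L\gamma, x\rangle = \sum_j L\gamma_j x_j$ and the index $\gamma$ runs over a product region, one has
\begin{align*}
g_N(x) = f_0(x)\sum_{|\gamma_1|\le N_1}\cdots\sum_{|\gamma_n|\le N_n} \prod_{j=1}^n e^{iL\gamma_j x_j} = \prod_{j=1}^n \Biggl[ f(x_j) \sum_{|\gamma_j|\le N_j} e^{iL\gamma_j x_j} \Biggr],
\end{align*}
where we used $f_0(x) = f(x_1)\cdots f(x_n)$ and the positivity $f\ge 0$ from Lemma \ref{MM07_s01_lm00}.

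Given this product structure, Fubini's theorem yields
\begin{align*}
\LNORM{g_N}{p}{\R^n}^p = \prod_{j=1}^n \int_{-\infty}^{+\infty} [f(x_j)]^p \Biggl|\sum_{|\gamma_j|\le N_j} e^{iL\gamma_j x_j}\Biggr|^p dx_j.
\end{align*}
Each factor on the right-hand side is precisely of the form treated in Corollary \ref{cor:f3} (with $M=N_j$), so it is bounded by $B N_j^{p-1}$ for a positive constant $B$ depending only on $L$, $p$ and $f$. Multiplying these estimates and extracting the $p$-th root gives
\begin{align*}
\LNORM{g_N}{p}{\R^n} \le B^{n/p} \prod_{j=1}^n N_j^{1-\frac{1}{p}},
\end{align*}
which is the claimed bound with $C = B^{n/p}$ depending only on $n$, $L$, $p$ and $f$.

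There is no real obstacle here, since all the analytic work has been carried out in Corollary \ref{cor:f3} (which in turn reduced the problem to the $L^p$-norm bound for the Dirichlet kernel in Lemma \ref{lemma:DM}). The only step to be careful about is the product factorization of $g_N$, which requires the tensor structure of $f_0$ and the product form of the summation range; both are built into the definition \eqref{MM07_s01_eq06}. The restriction $p>1$ is inherited from Lemma \ref{lemma:DM}, where the Dirichlet-kernel $L^p$ bound fails at $p=1$.
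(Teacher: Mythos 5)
Your proof is correct and follows essentially the same route as the paper: factor $g_N$ into a product of one-dimensional functions, apply Fubini, and bound each factor by Corollary \ref{cor:f3}. Your version is in fact slightly more careful than the paper's, which omits the exponent $p$ on the left-hand side of the identity $\LNORM{g_N}{p}{\R^n}^p=\prod_{j}\int[f(x_j)]^p\bigl|\sum_{|\gamma_j|\le N_j}e^{iL\gamma_j x_j}\bigr|^p\,dx_j$.
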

\begin{proof}
	Similarly to Corollary \ref{cor:hn}, we observe that
	\begin{align*}
		g_N(x)=\prod_{j=1}^n&\left[ f(x_j) \sum_{|\gamma_j|\le N_j} e^{iL\gamma_j x_j}\right]
		\\
		&\Rightarrow
		\|g_N\|_{L^p(\R^n)}=\prod_{j=1}^n
		\int_{-\infty}^{+\infty}[f(x_j)]^p\left|\sum_{|\gamma_j|\le N_j} e^{iL\gamma_j x_j}\right|^p\,dx_j,
	\end{align*}
	with $f\in\SSCH(\R)$ as in Lemma \ref{MM07_s01_lm00}. The result then follows immediately 
	by Corollary \ref{cor:f3}.
\end{proof}

\begin{proof}[Proof of Lemma \ref{lemma:f2}.]
	Since $f(0)=1$, the continuity of $f$ implies that there exists 
	$\delta=\delta(f)>0$ such that $|t|<\delta\Rightarrow f(t)\ge\dfrac{8}{9}$.
	Obviously, we can assume $\delta\in\left(0,\dfrac{1}{2}\right)$. Then, for 
	$t\in[k-\delta,k+\delta]$, $k\in\Z$, $|k|\le M$, we immediately have
	\begin{align*}
		|z_M(t)|&=\left| \sum_{|j|\le M}e^{i L j t } f(t-j) \right|
		=\left|e^{i L k t} f(t-k) +
		\sum_{|j|\le M,j\not=k}e^{i L j t} f(t-j)
		\right|
		\\
		&\ge f(t-k)-\left|\sum_{|j|\le M,j\not=k}e^{i L j t} f(t-j)\right|
		\ge f(t-k)-\sum_{|j|\le M,j\not=k} f(t-j)
		\\
		&\ge f(t-k)-\sum_{j\in\Z,j\not=k} f(t-k-(j-k))>\frac{1}{2},
	\end{align*}
	by the choice of $\delta$,
	since $f(t)\ge 0$ for all $t\in\R$ and \eqref{MM07_s01_eq07} holds.
\end{proof}

\begin{proof}[Proof of Lemma \ref{lemma:DM}.] 
	For $t\in(0,2\pi)$ we have
	\[
		D_M(t)=\frac{\sin\left[\left(M+\dfrac{1}{2}\right)t\right]}{\sin\left(\dfrac{t}{2}\right)},
	\]
	while $D_M(0)=D_M(2\pi)=2M+1$. Then, for all $M\ge 1$, $\left|\dfrac{D_M(t)}{M}\right|\le 3$
	for any $t\in\R$ and
	\begin{align*}
		\|D_M\|_{L^p(0,2\pi)}^p=\int_{-\pi}^{\pi} &|D_M(t)|^p\,dt=
		M^{p-1}\int_{-M\pi}^{M\pi}\
		\left|\frac{\sin\left(s+\dfrac{s}{2M}\right)}{M\sin\left(\dfrac{s}{2M}\right)}\right|^p ds
		\\
		&
		\Rightarrow
		\frac{\|D_M\|_{L^p(0,2\pi)}^p}{M^{p-1}}=
		\int_{-\infty}^{+\infty}\chi_{[-M\pi,M\pi]}(s)
		\left|\frac{\sin\left(s+\dfrac{s}{2M}\right)}{M\sin\left(\dfrac{s}{2M}\right)}\right|^p ds=d_M\,.
	\end{align*}
	The claimed result follows observing that, by dominated convergence\footnote{The elementary
	inequality
	$|t|\le\dfrac{\pi}{2}\Rightarrow |\sin t| \ge \dfrac{2}{\pi}|t|$ gives
	$s\in[-M\pi,M\pi]\Rightarrow \left|\sin\left(\dfrac{s}{2M}\right)\right|\ge\dfrac{|s|}{M\pi}$.
	The integrand in the expression of $d_M$ can then be bounded, for all $M\ge1$, by 
	$\left(\dfrac{\pi}{|s|}\right)^p$, $p>1$, for $|s|\ge\varepsilon>0$, and by a constant 
	for $|s|\le \varepsilon$.}, 
	the sequence $\{d_M\}$ admits 
	a finite limit for $M\to+\infty$, and is then bounded by a positive constant $K^p$.
\end{proof}

\section{Positive-definite functions}
\label{MM07_apx_FuncDefPos}
For the sake of completeness, we recall here a definition and some basic properties
of positive-definite functions. For more details, see, e.g., \cite{Sch473}.
\begin{definition}
\label{def:fdp}
Let $f$ be a complex-valued function defined on $\R^n$.
$f$ is said to be a \EMPH{positive-definite function} if, for any finite family of vectors
$(x_i)_{i=1,\dots, N}\,$, the matrix
\begin{align*}
    \left(f(x_i-x_j)\right)_{i,j=1,\dots,N}
\end{align*}
is positive semi-definite, that is, for any finite family of complex scalars $(\rho_i)_{i=1,\dots,N}\,$, we have
\begin{align}
\label{Sch473_eq65191}
    \sum_{i=1}^N\sum_{j=1}^N f(x_i-x_j)\rho_i\overline{\rho}_j\ge 0.
\end{align}
\end{definition}
\begin{theorem}
\label{Sch473_th647b}
Let $f$ be a positive-definite function. Then, $f$ has the following properties:
\begin{enumerate}[(a)]
\item
$f(0)\ge 0$,
\item
$f(-x)=\overline{f(x)}$,
\item
$|f(x)|\le f(0)$.
\end{enumerate}
\end{theorem}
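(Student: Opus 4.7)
The plan is to derive all three properties directly from the defining inequality \eqref{Sch473_eq65191} by making well-chosen finite choices of points $x_i$ and scalars $\rho_i$.

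For (a), I would take the trivial case $N=1$, $x_1=0$, $\rho_1=1$, which immediately gives $f(0)\cdot 1 \ge 0$. For (b) and (c), the whole game is played with $N=2$, $x_1=0$, $x_2=x$. Expanding \eqref{Sch473_eq65191} in this case yields
\begin{equation*}
    Q(\rho_1,\rho_2)=f(0)(|\rho_1|^2+|\rho_2|^2)+f(-x)\rho_1\overline{\rho}_2+f(x)\overline{\rho}_1\rho_2\ge 0,
\end{equation*}
which, being non-negative, is in particular real for every choice of $\rho_1,\rho_2\in\C$. The key observation is that the diagonal part is automatically real, so the off-diagonal part $f(-x)\rho_1\overline{\rho}_2+f(x)\overline{\rho}_1\rho_2$ must be real as well. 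Equating this expression with its complex conjugate $\overline{f(-x)}\overline{\rho}_1\rho_2+\overline{f(x)}\rho_1\overline{\rho}_2$ and noting that $\rho_1\overline{\rho}_2$ and $\overline{\rho}_1\rho_2$ can be varied independently (e.g. choosing $(\rho_1,\rho_2)=(1,1)$ and then $(\rho_1,\rho_2)=(1,i)$), one obtains $f(x)=\overline{f(-x)}$, which is (b).

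For (c), I would use property (b) just established. The $2\times 2$ Hermitian matrix
\begin{equation*}
    A=\begin{pmatrix} f(0) & f(-x) \\ f(x) & f(0) \end{pmatrix}=\begin{pmatrix} f(0) & \overline{f(x)} \\ f(x) & f(0) \end{pmatrix}
\end{equation*}
is positive semi-definite by hypothesis, so its determinant is non-negative: $f(0)^2-f(x)\overline{f(x)}=f(0)^2-|f(x)|^2\ge 0$. Combined with (a), which gives $f(0)\ge 0$, taking square roots yields $|f(x)|\le f(0)$.

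The argument is entirely elementary and the only minor subtlety is the extraction of Hermiticity in step (b), where one must be careful to separate the contributions of $\rho_1\overline{\rho}_2$ and its conjugate; this is the only place where a small calculation is required. No serious obstacle is expected.
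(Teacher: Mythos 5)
Your proposal is correct and follows essentially the same route as the paper: both test the defining inequality \eqref{Sch473_eq65191} with $N=2$ and the points $0$ and $x$, extract (b) from the reality of the off-diagonal contribution, and obtain (c) from the resulting quadratic constraint. The only cosmetic difference is in (c), where you invoke the non-negativity of the determinant of the $2\times 2$ Hermitian positive semi-definite matrix, while the paper makes the explicit optimal choice $\lambda=-\overline{f(x)}/f(0)$ (treating $f(0)=0$ separately); these are the same computation.
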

\begin{proof}
Let $I=\{1,2\}$, $x_1=x$, $x_2=0$, $\rho_1=\lambda\in\C$, $\rho_2=1$.
Applying \eqref{Sch473_eq65191}, we find 
\begin{align}
\label{Sch473_eq65193}
    f(0)+f(x)\lambda+f(-x)\overline{\lambda}+f(0)|\lambda|^2\ge 0.
\end{align}
Since \eqref{Sch473_eq65193} holds for any $\lambda$,
choosing $\lambda=0$ we find $f(0)\ge 0$, as claimed.\\
\eqref{Sch473_eq65193} and (a) imply
\begin{align}
\label{Sch473_eq65193b}
f(x)\lambda+f(-x)\overline{\lambda}\in\R.
\end{align}
Since $f(x)\lambda+\overline{f(x)\lambda}=2\RE(f(x)\lambda)\in\R$,
substracting \eqref{Sch473_eq65193b}, we find
$(\overline{f(x)}-f(-x))\overline{\lambda}\in\R$ for any $\lambda\in\C$.
Then, choosing $\lambda=i(\overline{f(x)}-f(-x))$ we have
\begin{align}
\label{Sch473_eq65193c}
-i\bigl|\overline{f(x)}-f(-x)\bigr|^2\in\R,
\end{align}
and \eqref{Sch473_eq65193c} holds if and only if $f(-x)=\overline{f(x)}$,
which is property (b).\\
In view of (b), \eqref{Sch473_eq65193} implies also
\begin{align*}
    f(0)+\RE(\lambda f(x))+f(0)|\lambda|^2\ge 0,
    \SPC
    \lambda\in\C.
\end{align*}
If $f(0)=0$, when $\lambda=-\overline{f(x)}$ we have $-|f(x)|^2\ge 0\Rightarrow f(x)=0$.
In the other hand, when $f(0)>0$, choosing
\begin{align*}
    \lambda=-\frac{\overline{f(x)}}{f(0)},
\end{align*}
we obtain $f(0)^2\ge|f(x)|^2$. The proof is complete.
\end{proof}
\begin{theorem}
Let $f\in\LP{2}(\R^n)$ and $g$ be given by
\begin{align*}
    g(x)=\int_{\R^n}f(x+y)\overline{f(y)}\,dy.
\end{align*}
Then, $g$ is a continuous positive-definite function.
\end{theorem}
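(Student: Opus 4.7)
The plan is to establish the two required properties separately, each leveraging the Hilbert space structure of $L^2(\R^n)$. The crucial preliminary observation is that the translation change of variable $u = y - x_j$ in the defining integral gives the identity
\[
g(x_i - x_j) = \int_{\R^n} f(x_i - x_j + y)\overline{f(y)}\,dy = \int_{\R^n} f(x_i + u)\overline{f(x_j + u)}\,du,
\]
which rewrites $g(x_i-x_j)$ as the $L^2(\R^n)$ inner product $\langle \tau_{x_i} f, \tau_{x_j} f\rangle$, where $\tau_x f(y) = f(x+y)$. This identity is what makes the problem essentially a Hilbert space Gram matrix computation.

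For positive-definiteness, given any finite family $x_1,\dots,x_N\in\R^n$ and complex scalars $\rho_1,\dots,\rho_N$, I would use the identity above and interchange the (finite) sum with the integral to obtain
\[
\sum_{i=1}^N\sum_{j=1}^N g(x_i-x_j)\rho_i\overline{\rho_j}
= \int_{\R^n}\sum_{i=1}^N\sum_{j=1}^N \rho_i\overline{\rho_j}\,f(x_i+u)\overline{f(x_j+u)}\,du
= \int_{\R^n}\left|\sum_{i=1}^N\rho_i f(x_i+u)\right|^2 du \ge 0,
\]
which is precisely the condition \eqref{Sch473_eq65191} of Definition \ref{def:fdp}. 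The interchange requires no justification beyond linearity, since the sum is finite; and the integrand is integrable because each $\tau_{x_i}f \in L^2(\R^n)$.

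For continuity, I would rely on the classical fact that $L^p$-translation is continuous, that is, $x\mapsto \tau_x f$ is a continuous map from $\R^n$ into $L^2(\R^n)$ for every $f\in L^2(\R^n)$. Writing $g(x) = \langle \tau_x f, f\rangle_{L^2(\R^n)}$ and applying the Cauchy-Schwarz inequality yields
\[
|g(x) - g(x')| = \bigl|\langle \tau_x f - \tau_{x'} f,\, f\rangle_{L^2(\R^n)}\bigr|
\le \|\tau_x f - \tau_{x'} f\|_{L^2(\R^n)}\cdot \|f\|_{L^2(\R^n)} \xrightarrow[x\to x']{} 0.
\]

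There is no real obstacle here: the argument is almost entirely a bookkeeping exercise once the key identity $g(x_i-x_j) = \langle \tau_{x_i} f, \tau_{x_j} f\rangle$ is written down. The only item worth flagging is that continuity relies on the standard lemma on continuity of translations in $L^p$, which one might either invoke directly or prove by density of $C_c(\R^n)$ in $L^2(\R^n)$ combined with uniform continuity of compactly supported continuous functions.
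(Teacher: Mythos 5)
Your proof is correct and follows essentially the same route as the paper: the same change of variables $y\mapsto y+x_j$ turns the double sum into $\int_{\R^n}\bigl|\sum_i\rho_i f(x_i+u)\bigr|^2\,du\ge 0$ (the paper phrases this via the positive semi-definite all-ones matrix, you write the square modulus directly, which is cleaner). For continuity the paper simply invokes the continuity of the convolution $f*\tilde f$ of two $L^2$ functions, while you unwind that fact into continuity of translations plus Cauchy--Schwarz; these are the same argument at different levels of detail.
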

\begin{proof}
Obviously, $g={f}*{\tilde{f}}$ with $\tilde{f}(x)=\overline{f(-x)}$,
which implies the continuity of $g$ on $\R^n$, by the properties of
the convolution.
Let $(x_i)_{i=1,\dots,N}$ be a family of vectors and $(\rho_i)_{i=1,\dots,N}$
a family of complex scalars as in Definition \ref{def:fdp}. We then have
\begin{align}
\begin{split}
\label{Sch473_eq65195b}
    \sum_{i,j=1}^N g(x_i-x_j)\rho_i\overline{\rho}_j
    & =
    \sum_{i,j=1}^N
    \Biggl(\int_{\R^n}f(x_i-x_j+y)\overline{f(y)}\,dy\Biggr)\rho_i\overline{\rho}_j \\
    & =
    \sum_{i,j=1}^N
	\int_{\R^n} \rho_i f(x_i-x_j+y)\overline{\rho_j f(y)}\,dy.
\end{split}
\end{align}
By the changes of variable $y\rightarrow y+x_j$, the last expression in 
\eqref{Sch473_eq65195b} turns into the integral
\begin{align*}
	\int_{\R^n}  \Biggl(\sum_{i,j=1}^N  \rho_i f(x_i+y)&\cdot\overline{\rho_j f(x_j+y)}\Biggr)\,dy
	\\=&
	\int_{\R^n}\biggl\langle\left(\rho_i f(x_i+y)\right)_{i=1,\dots,N}, 
	\left(\rho_i f(x_i+y)\right)_{i=1,\dots,N}\biggr\rangle_{M_{n,1}}\,dy
	,
\end{align*}
which is non-negative\footnote{Remember that the ($n\times n$)-dimensional matrix $M_{n,1}$
with all entries equal to $1$ is positive semi-definite, since one of its eigenvalues is equal to $n$,
while all the others vanish. Such a matrix defines the bilinear form evaluated at $(v,v)$,
$v=\left(\rho_i f(x_i+y)\right)_{i=1,\dots,N}$, which is present in the last integral.} and gives the desired result.
\end{proof}

\newcommand{\noopsort}[1]{}
\providecommand{\bysame}{\leavevmode\hbox to3em{\hrulefill}\thinspace}
\providecommand{\MR}{\relax\ifhmode\unskip\space\fi MR }
\providecommand{\MRhref}[2]{%
  \href{http://www.ams.org/mathscinet-getitem?mr=#1}{#2}
}
\providecommand{\href}[2]{#2}

\end{document}